\newtheorem{thm}{Theorem}
\newtheorem{cor}[thm]{Corollary} 
\newtheorem{lem}[thm]{Lemma} 
\theoremstyle{definition} 
\newtheorem{defn}[thm]{Definition}
\theoremstyle{definition} 
\theoremstyle{definition} 
\theoremstyle{definition}
\newtheorem{remarks}[thm]{Remarks}
\theoremstyle{definition}
\theoremstyle{definition} 
\theoremstyle{definition}
\numberwithin{thm}{subsection}
\newtheorem*{thm*}{Theorem}
\newtheorem*{cor*}{Corollary}
\newcommand{\R}{\ensuremath{\mathbb{R}}}
\newcommand{\<}{\langle} 
\renewcommand{\>}{\rangle} 
\def\p{\partial} 
\def\i{\infty}
\def\a{\alpha}
\def\e{\epsilon}
\def\det{\it{det}} 
\def\supp{\it{supp}}
\def\b{\beta} 
\def\g{\gamma}
\def\L{\Lambda} 
\def\o{\omega} 
\def\O{\Omega} 
\def\s{\sigma} 
\def\t{\tau} 
\def\D{\Delta}
\def\cal{\mathcal}
\def\rotateminus{\reflectbox{\rotatebox[origin=c]{155}{\hspace{.6pt}-}}}
\def\Xint#1{\mathchoice
{\XXint\displaystyle\textstyle{#1}}%
{\XXint\textstyle\scriptstyle{#1}}%
{\XXint\scriptstyle\scriptscriptstyle{#1}}%
{\XXint\scriptscriptstyle\scriptscriptstyle{#1}}%
\!\int}
\def\XXint#1#2#3{{\setbox0=\hbox{$#1{#2#3}{\int}$}
\vcenter{\hbox{$#2#3$}}\kern-.5\wd0}}
\def\cint{\Xint    \rotateminus }
\begin{document} 
	\title[Geometric Poincar\'e Lemma]{Geometric Poincar\'e Lemma }  
\author[J. Harrison]{J. Harrison  \\Department of Mathematics \\University of California, Berkeley} 

\thanks{The author was partially supported by the Miller Institute for Basic Research in Science and the Foundational Questions in Physics Institute}
 
\maketitle                     
        
 \section{Introduction}  
\label{sec:introduction}

  						 The Poincar\'e Lemma for the de Rham complex \(\O(U) = \oplus_{k=0}^n \O^k(U)  \) of smooth differential forms defined on contractible open subsets \( U \) of \( \R^n \) is a foundational result with applications from topology to physics: \emph{Every closed  differential  form \( \o \in \O_k(U) \) is exact}.  That is,  if \( d \o = 0 \)  where \( \o \in \O_k(U) \), then there exists  $\eta \in \O_{k-1}(U) $   such that $\o = d\eta.$    
						Such forms $\eta$ are found through a homotopy operator $A$  acting on forms where $\eta = A \o$.   That such an operator $A$  exists is a remarkable and powerful feature of the de Rham complex.  
						
						A  homotopy operator \( K \) on the chain complex of  polyhedral $k$-chains  is easy to construct using the classical cone construction from topology, and works quite well for the category of polyhedral chains.  One way to generalize Poincar\'e's Lemma is to introduce a topology on each vector space of polyhedral $k$-chains with the hope of extending \( K \) to  a continuous homotopy operator on the chain complex of the completed spaces.    For a coherent theory with broad application to domains of integration going far beyond polyhedral chains, we need the dual space to be an identifiable space of differential $k$-forms, and the dual operator \( A \o := \o K \), which is necessarily continuous since \( K \) is continuous,  to be a computable homotopy operator.    
 						
						Whitney's  Banach space of ``sharp chains''  \cite{whitney} has no continuous boundary operator, and thus there is  no  meaningful cone operator in the sharp space.  The cone operator \( K \) on polyhedral chains is not generally continuous in the Banach space of ``flat chains'', although it is continuous in the  subspace of flat chains with finite mass, and this has been useful as a tool for solving a special case of Plateau's problem for integral currents    (\cite{federerfleming}, see also \cite{fleming}).      Our efforts to solve the Plateau problem in full generality \cite{plateau10} led to the results in this paper.                                  

						 Our main result is a  geometric Poincar\'e Lemma for a certain differential chain complex (to be defined below) of topological vector spaces \(  \hat{\cal B}_k(U_1) \) of ``differential  $k$-chains''    in an open set \( U_1 \)  that is contractible in an open set \( U_2  \)  with \( U_1 \subset U_2 \subset \R^n \).    That is, there exists \( F:[0,1] \times U_1 \to U_2 \)   with \( F(0,p) = p \) and \( F(1;p) = c \) for some constant \( c \in U_2 \). 
						\vspace{-0.2in}\begin{center}
						 \emph{Every cycle \( J \in \hat{\cal B}_k(U_1) \)   is a boundary in \( \hat{\cal B}_k(U_2) \) for all \( 1 \le k \le n-1 \).} 
						\end{center} \vspace{-0.2in} That is,  if \( J \in \hat{\cal B}_k(U_1) \) satisfies \( \p J = 0 \), there exists \( K \in \hat{\cal B}_{k+1}(U_2) \) with \( \p K = J \).  (See Theorem \ref{thm:poincarelemma} in \S\ref{PL}.)       
						
						As a corollary, we obtain the following dual result.  Let \( {\cal B}_k(U) \) be the Frech\'et space of differential forms defined on \( U \), each with uniforms bounds on each of its directional derivatives.  Then \( {\cal B}_k(U) \) is the topological dual of \(  \hat{\cal B}_k(U) \) (see \cite{OC} Theorem 2.12.8).
					\vspace{-0.2in}\begin{center} 	
			 \emph{Every closed form \( \o \in {\cal B}_k(U_2) \)  is exact in  \( {\cal B}_{k-1}(U_1) \), for all		   \( 1 \le k \le n-1 \).}  \end{center}
			 \vspace{-0.2in} That is,  if $\omega \in {\cal B}_k(U_2)$ satisfies \( d \o = 0 \), there exists \( \eta \in {\cal B}_{k-1}(U_1) \)  with $\omega|_{U_1} = d \eta$.

	 There are three primary reasons for calling our result a ``geometric Poincar\'e Lemma''.  First of all, the topology of \( \hat{\cal B}_k(U) \) is defined constructively, and is not simply the abstract dual of the space of currents \( {\cal B}_k(U)' \). Similarly, the cone operator \( K \) is defined geometrically. Finally, Dirac chains are dense in \( \hat{\cal B}_k(U) \), yielding a  discrete and computable version of the geometric Poincar\'e's lemma and its cone operator \( K \).  The operator \( K \) restricts to the classical cone operator on polyhedral chains, and thus our theorem is a generalization of the classical version of Poincar\'e's Lemma for chains.      

 						Applications presented in this paper include a broad generalization of the Intermediate Value Theorem to arbitrary dimension and codimension (see Theorem \ref{ivt} and Figure \ref{fig:funky}), a new approach to homology theory  with potential extensions to non-manifolds \cite{harrisonhirsch}.   H. Pugh uses results of this paper to find generalizations of the Cauchy integral theorems \cite{cauchy}, and, in a sequel  \cite{plateau10}, the author uses the geometric Poincar\'e Lemma  to provide a general solution to Plateau's problem. 

The author is indebted to M.W. Hirsch and H. Pugh for helpful comments.    

    \renewcommand{\labelenumi}{(\alph{enumi})}
    \section{Preliminaries} 
\label{sub:preliminaries}    	  
   This work relies on new methods of calculus presented in \cite{OC}.  In this preliminary section we recount the definition of the topological vector space \( \hat{\cal B}(U) \) and its operators  from \cite{OC}  which we use below.        
 
 \subsection{The Mackey topology on Dirac chains}  
 \label{sub:subsection_name}
 
Let \( {\cal A}_k(U) \) be the vector space of \emph{Dirac $k$-chains}, i.e., formal sums \( \sum (p_i; \a_i) \) where \( p_i \in U \) and \( \a_i \in \L_k(\R^n) \).  We call \( (p;\a) \) a \emph{simple} $k$-\emph{element} if \( \a \) is a simple $k$-vector.  Otherwise, \( (p;\a) \) is called a $k$-\emph{element}.    Since \( ({\cal A}_k(U), {\cal B}_k(U)) \) is a dual pair, the Mackey topology \( \t_k \) is uniquely determined on  \( {\cal A}_k(U) \).   (The Mackey topology \( \t_k \) is the finest topology \( \mu_k \) on \( {\cal A}_k(U) \) such that  \( ({\cal A}_k(U), \mu_k)' = {\cal B}_k(U) \).) This process may be mimicked for any subspace of  forms, e.g., \( {\cal D}_k(U) \), the space of smooth functions with compact support in \( U \),  but \( {\cal B}_k(U) \) is especially nice to work with because of its useful algebra of continuous operators, and algebraic features of its topology.  The complex \( \hat{\cal B}_k(U)  \), which can also be defined for open subsets of Riemannian manifolds, is intrinsic in the following sense recently announced in    \cite{topological}:  
		  
		\begin{thm*}\label{thm:topo}
		  	The topology \( \t_k(U) \) is the finest topology in the collection \( \{\mu\} \) of  locally convex Hausdorff topologies  on Dirac chains ${\cal A}_k(U)$    satisfying three axioms:
			\begin{enumerate}
				\item The topological vector space $({\cal A}_k(U),\mu)$ is bornological;
				\item $K^0= \{(p;\a)\in {\cal A}_k(U) : \|\a\|=1\}$ is bounded where $\|\a\|$ is the mass norm of $\a$;
				\item  The linear map $P_v: ({\cal A}_k(U),\mu)\rightarrow\overline{({\cal A}_k(U),\mu)}$ determined by $P_v(p;\a):=\lim_{t\rightarrow 0}(p+v;\a/t)-(p;\a/t)$ is well-defined and uniformly bounded for all unit vectors $v\in \R^n$.
				\end{enumerate}
		\end{thm*}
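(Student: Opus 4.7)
The plan is to prove this characterization in two steps: first, verify $\tau_k(U)$ satisfies axioms (1)--(3) so that it belongs to the collection; second, show that any $\mu$ in the collection satisfies $\mu \le \tau_k(U)$. For the first step, axiom (2) is essentially formal: each $(p;\alpha) \in K^0$ pairs with $\omega \in \mathcal{B}_k(U)$ via $\omega(p)(\alpha)$, which is uniformly bounded when $\omega$ ranges in a $\mathcal{B}_k$-bounded set (by mass-comass duality and the uniform boundedness of forms in such sets). Hence $K^0$ is $\sigma(\mathcal{A}_k, \mathcal{B}_k)$-bounded, and therefore $\tau_k$-bounded, since bounded sets agree for all topologies compatible with a given dual pair. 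Axiom (3) follows from the constructive description of $\tau_k$ in \cite{OC}, where the limits $P_v(p;\alpha)$ are exhibited as specific ``dipole'' chains in $\hat{\mathcal{B}}_k(U)$ with uniformly bounded norms. Axiom (1), bornologicity of $\tau_k$, is the deepest point; I would invoke the realization of $\tau_k$ from \cite{OC} as an inductive limit of graded Banach spaces of differential chains of finite order, an inductive limit which is automatically bornological.

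For the second step, let $\mu$ be in the collection and $f \in (\mathcal{A}_k(U),\mu)'$. By Mackey-Arens, $\mu$ is dominated by the Mackey topology for its own dual, and that in turn is dominated by $\tau(\mathcal{A}_k, \mathcal{B}_k) = \tau_k$ provided $(\mathcal{A}_k,\mu)' \subseteq \mathcal{B}_k(U)$, since the Mackey topology is monotone in the dual. So it suffices to show every such $f$ corresponds to a form in $\mathcal{B}_k(U)$. Set $\omega(p)(\alpha) := f(p;\alpha)$; axiom (2) gives $|f(p;\alpha)| \le M$ on $K^0$, so $\|\omega(p)\|^* \le M$ uniformly, identifying $\omega$ as pointwise bounded. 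Continuity of $f$ extends it to $\overline{(\mathcal{A}_k,\mu)}$, and axiom (1) combined with (3) forces each $P_v$ to be a continuous map into the completion (every bounded linear map from a bornological space is continuous). A direct computation then gives $f(P_v(p;\alpha)) = \partial_v \omega(p)(\alpha)$, so the uniform bound on $P_v$ yields a uniform bound on $\partial_v \omega$; iterating with $P_{v_m} \circ \cdots \circ P_{v_1}$ produces uniform bounds on all iterated directional derivatives of $\omega$, placing $\omega$ in $\mathcal{B}_k(U)$.

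The main technical obstacle will be bornologicity of $\tau_k$ in the first step, since an abstract Mackey topology need not be bornological, forcing reliance on the concrete inductive-limit construction of $\tau_k$ from \cite{OC} rather than its abstract characterization by Mackey-Arens. A secondary difficulty in the second step lies in iterating $P_v$: each composition $P_{v_m} \circ \cdots \circ P_{v_1}$ must be interpreted as a continuous linear map through successively enlarged completions, with operator-norm bounds controlled uniformly in the $v_i$, so that $\omega$ satisfies every uniform-derivative seminorm defining $\mathcal{B}_k(U)$.
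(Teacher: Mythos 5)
The first thing to note is that the paper contains no proof of this statement to compare against: it is quoted as a result ``recently announced in \cite{topological}'' and used as a black box, so your proposal can only be judged on its own merits and against the cited reference, not against anything in this manuscript.

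On its merits, your two-step strategy is the natural one and, as far as I can tell, matches the architecture of the argument in \cite{topological}: (i) place $\tau_k$ in the collection, and (ii) show that any admissible $\mu$ has $(\mathcal{A}_k(U),\mu)' \subseteq \mathcal{B}_k(U)$ and then conclude $\mu \le \tau_k$ by Mackey--Arens together with monotonicity of the Mackey topology in the dual. Both of those functional-analytic ingredients are correctly invoked, and your identification of the key difficulty in step (i) is accurate: an abstract Mackey topology need not be bornological, so axiom (1) genuinely requires the concrete realization of $\tau_k$ as the inductive limit of the $B^r$-norm topologies. Be aware, though, that this realization is itself the main theorem of \cite{topological} (a bornological space automatically carries the Mackey topology of its dual pair, so ``inductive limit is bornological with dual $\mathcal{B}_k$'' is essentially equivalent to the identification you are citing); your proof therefore outsources most of the content of axiom (1) rather than establishing it. The other place where your outline is thinner than a proof is the regularity bootstrap in step (ii): from the existence of the limits $f(P_{v_m}\cdots P_{v_1}(p;\alpha))$ and their uniform bounds you must actually conclude that $\omega(p)(\alpha):=f(p;\alpha)$ is of class $B^r$ for every $r$ --- i.e.\ that iterated directional derivatives exist classically, are continuous, and satisfy the Lipschitz condition at top order --- and you must make sense of each composite $P_{v_{j+1}}\circ P_{v_j}$ through successively completed spaces. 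You flag both issues, which is the right instinct, but they are stated as obstacles rather than resolved. In short: correct skeleton, no wrong turns, but the two hardest steps are cited or deferred rather than proved.
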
    

 The first two properties are essential to analysis in this space, the third suffices for the  Lie derivative of differential forms to be a bounded operator.    It is shown in \cite{topological} that the Mackey topology \( \t_k \) coincides with the constructive definition of \( \t_k \) of \cite{OC}, which we next recall. 
\subsection{Constructive definition of the \( B^r \) norms}  
\label{sub:constructive_defn}

 \subsubsection{Mass norm} \label{ssub:differential_chains}
     An inner product \( \<\cdot,\cdot\> \) on \( \R^n \) determines the mass norm on \( \L_k(U) \) as
		follows: Let \( \<u_1 \wedge \cdots \wedge u_k, v_1 \wedge \cdots \wedge v_k \> = \det(\<u_i,v_j\>) \).  The	\emph{mass} of a simple $k$-vector \( \a = v_1 \wedge \cdots \wedge v_k \) is defined by \(
		\|\a\| := \sqrt{\<\a,\a\>} \). The mass of a $k$-vector \( \a \) is \( \|\a\| := \inf\left\{\sum_{j=1}^{N} \|(\a_i)\| : \a_i \mbox{ are simple, } \a =
		\sum_{j=1}^N \a_i \right\}. \) Define the \emph{mass} of a $k$-element \( (p;\a) \) by  \(\|(p;\a)\|_{B^0} := \|\a\|  \). Mass is a norm on the subspace of Dirac $k$-chains
		supported in \( p \), since that subspace is isomorphic to the exterior algebra \(
		\L(\R^n) = \oplus_{k=0}^n \L_k(\R^n) \) for which mass is a norm (see \cite{federer}, p 38-39). The \emph{mass} of a Dirac $k$-chain \( A =  \sum_{i=1}^{m} (p_i; \a_i) \in {\cal A}_k(U) \)   is given by  
	   \[  \|A\|_{B^0} := \sum_{i=1}^{m} \|(p_i; \a_i)\|_{B^0}. \] If a different inner product is chosen, the resulting masses of Dirac
		chains   are topologically equivalent.    It is straightforward to show that \( \|\cdot\|_{B^0} \) is a norm on \( {\cal A}_k(U) \).  

		\subsubsection{Difference chains and the \( B^r \)
	  norm on chains} \label{sub:difference_chains}

		 Given a $k$-element \( (p;\a) \) with \( p \in U \) and \( u \in \R^n \), let \( T_u(p;\a) := (p+u;\a) \)
		be translation through \( u \), and \( \D_u(p;\a) := (T_u -I)(p; \a) \). Let \( S^j =
		S^j(\R^n) \) be the \( j \)-th \emph{symmetric power} of the symmetric algebra \( S(\R^n)
		\). Denote the symmetric product in the symmetric algebra \( S(\R^n) \) by \( \circ \). Let \( \s = \s^j = u_j \circ \dots
		\circ u_1 \in S^j \) with \( u_i \in \R^n, i = 1, \dots, j \). Recursively define $\D_{u
		\circ \s^{j}}(p; \a) := (T_u - I)(\D_{\s^j}(p; \a))$. Let \( \|\s\| := \|u_1\| \cdots
		\|u_j\| \) and \( |\D_{\s^j}(p; \a)|_{B^j} := \|\s\| \|\a\|. \)  Let \( \D_{\s^0} (p;\a) := (p;\a) \), to keep the notation consistent.   We say \( \D_{\s^j} (p;\a) \) is \emph{inside} \( U \) if the convex hull of \( \supp( \D_{\s^j} (p;\a)) \) is a subset of \( U \).

\begin{defn} \label{def:norms} For \( A \in {\cal A}_k(U) \) and \( r\ge 0 \), define the seminorm \[   \|A\|_{B^{r,U}} := \inf \left \{ \sum_{j=0}^r \sum_{i=1}^{m_j} \|\s_{j_i}\|\|\a_{j_i}\|: A = \sum_{j=0}^r \sum_{i=1}^{m_j} \D_{\s_{j_i}^j}(p_{j_i};\a_{j_i}) \mbox{ where } \D_{\s_{j_i}^j}(p_{j_i};\a_{j_i})  \mbox{ is inside } U \right\}. \] 
\end{defn}   For simplicity, we often write \( \|A\|_{B^r} =  \|A\|_{B^{r,U}} \) if \( U \) is  understood.   It is easy to see that the \( B^r \) norms on Dirac chains are decreasing as \( r   \) increases.

    It is shown in \cite{OC} (Theorem 2.6.1)  that \( \|\cdot\|_{B^r} \) is a norm on the free space of Dirac $k$-chains \( {\cal A}_k(U) \) called the \emph{\( B^r \) norm}.  
 		Let \( \hat{\cal B}_k^r(U) \) be the Banach space obtained upon completion of \( {\cal A}_k(U) \) with the \( B^r \) norm.  Elements of \( \hat{\cal B}_k^r(U), 0 \le r \le \i \),  are called \emph{differential\footnote{The adjective ``differential'' here does not indicate that the chains are in any way smooth, only that they are closed under the topological dual to the Lie derivative operator of differential forms.}   $k$-chains of class \( B^r \) in \( U \)}.   Let \(\hat{\cal B}_k(U)  =  \hat{\cal B}_k^\i(U) := \varinjlim \hat{\cal B}_k^r(U) \) be the inductive limit as \( r \to \i \), endowed with the inductive limit topology.     It is shown in \cite{topological} that \( \overline{\hat{\cal B}_k(U)} \cong \overline{({\cal A}_k(U), \t_k)} \).  Therefore, the inductive limit topology coincides with the Mackey topology \( \t_k \). 

  Let \( {\cal B}_k^0(U) \) be the Banach space of bounded measurable $k$-forms, \( {\cal B}_k^1(U) \) the Banach space of Lipschitz $k$-forms, and for  each  \( r > 1 \),   \( {\cal B}_k^r(U) \) be the Banach space of differential $k$-forms, each with bounds on each of the \( s \)-th order directional derivatives for \( 0 \le s \le r-1 \) and the \( r \)-th derivatives satisfy a Lipschitz condition.    Denote the resulting norm by \( \|\cdot\|_{B^r} \).  Elements of \( {\cal B}_k^r(U) \) are called \emph{differential   $k$-forms of class \( B^r \) in \( U \)}.     The natural inclusions \(  \hat{\cal B}_k^r(U_1) \hookrightarrow  \hat{\cal B}_k^r(U_2) \) are continuous for all open \( U_1 \subset U_2 \subset \R^n \). 
  Let \( {\cal B}_k(U) = {\cal B}_k^\i(U) = \varprojlim {\cal B}_k^r(U)  \) be the projective limit as \( r \to \i \), endowed with the Frech\'et topology.  If \( U_1\subset U_2 \),   the natural restrictions \( {\cal B}_k^r(U_2) \to {\cal B}_k^r(U_1)  \) are continuous, but they are not surjective unless \( U_1 \) is a regular open subset, that is, \( U_1 \) equals the interior of its closure.     It is shown in \cite{OC} Theorem 2.8.2 that \(  (\hat{\cal B}_k^r(U))' \cong {\cal B}_k^r(U)  \), and the integral pairing \( \cint: \hat{\cal B}_k^r(U) \otimes {\cal B}_k^r(U) \to \R \) where \( J \otimes \o \mapsto \o(J) \) is nondegenerate.  Let  \( \cint_J \o := \o(J) \) for all \( J \in \hat{\cal B}_k^r(U) \) and \( \o \in  {\cal B}_k^r(U) \).

	If \( J \) is nonzero then its support \( \supp(J) \) is a uniquely determined nonempty set (see \cite{OC} Theorem 6.1.3).           The \emph{support}  of a nonzero $k$-chain \( J \in \hat{\cal B}_k(U) \) is the smallest closed subset \( E \subset U \)   such that  \( \cint_J \o = 0 \) for all smooth \( \o \) with compact support disjoint from \( E \)  (see \cite{OC} Theorem 6.2.2).

 \subsection{Operators}  
\label{sub:operators}

 Let \( {\cal B}(U) := \oplus_{k=0}^n {\cal B}_k(U) \). Since     \(  \hat{\cal B}_k(U) \) is a subspace of currents \( {\cal{B}}_k(U)' \)  which is proper if \( U = \R^n \) (see \cite{topological}), a natural question arises:  If \( T: {\cal B}(U_1) \to {\cal B}(U_2) \) is a continuous operator on forms, we know its dual \( S(\o) := \o(T) \) is continuous on currents   \(S: {\cal B}(U_2)' \to {\cal B}(U_1)' \).  Is \( S: \hat{\cal B}(U_2) \to \hat{\cal B}(U_1) \) closed?     The constructive definition of the Mackey topology \( \t_k \) is often useful for answering such questions as seen in the following Lemma.  

\begin{lem}\label{lem:test}
	 Fix \( 0 \le s \le r \)  and \( 0 \le k \le n \). If \( T: {\cal A}(U_1) \to {\cal A}(U_2) \) is a graded operator  
	satisfying \[ \|T(\D_{\s^j} (p;\a))\|_{B^r} \le C \|\s\|\a\| \] for some constant \( C > 0 \)  and all $j$-difference $k$-chains \(  \D_{\s^j} (p;\a) \)  inside \( U_1 \) with \( \s^j \in S^j(\R^n) \),     \( 0 \le j \le s  \), \( \a \in \L_k(\R^n) \), then \( \|T(A)\|_{B^r} \le C\|A\|_{B^s} \) for all \( A \in
	{\cal A}_k(U_1) \).
\end{lem}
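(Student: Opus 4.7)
The plan is to reduce the inequality for an arbitrary Dirac chain $A$ to the hypothesized bound on individual difference chains, using the fact that the $B^s$ norm is defined as the infimum of the total cost $\sum_{j,i}\|\sigma_{j_i}\|\|\alpha_{j_i}\|$ over all admissible decompositions $A = \sum_{j=0}^s \sum_{i=1}^{m_j} \Delta_{\sigma_{j_i}^j}(p_{j_i};\alpha_{j_i})$ with each summand inside $U_1$. The key inputs are: (i) such a decomposition can be found realizing the infimum up to any prescribed $\epsilon>0$; (ii) $T$ is linear, so it distributes over the finite sum; and (iii) $\|\cdot\|_{B^r}$ on the target space is a norm (by \cite{OC}, Theorem 2.6.1), so the triangle inequality applies.

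Concretely, I would proceed as follows. Fix $A \in {\cal A}_k(U_1)$ and $\epsilon>0$. By Definition \ref{def:norms}, pick an admissible decomposition $A = \sum_{j=0}^s \sum_{i=1}^{m_j} \Delta_{\sigma_{j_i}^j}(p_{j_i};\alpha_{j_i})$ with every summand inside $U_1$ and $\sum_{j,i}\|\sigma_{j_i}\|\|\alpha_{j_i}\| < \|A\|_{B^s} + \epsilon$. Using linearity of $T$ and the triangle inequality for $\|\cdot\|_{B^r}$ on ${\cal A}_k(U_2)$, together with the hypothesized bound applied to each summand,
\[
\|T(A)\|_{B^r} \le \sum_{j=0}^s \sum_{i=1}^{m_j} \bigl\|T(\Delta_{\sigma_{j_i}^j}(p_{j_i};\alpha_{j_i}))\bigr\|_{B^r} \le C \sum_{j=0}^s \sum_{i=1}^{m_j} \|\sigma_{j_i}\|\|\alpha_{j_i}\| < C(\|A\|_{B^s} + \epsilon).
\]
Sending $\epsilon \to 0$ yields $\|T(A)\|_{B^r} \le C\|A\|_{B^s}$.

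The argument involves no real obstacle, because the defining infimum of the $B^s$ norm ranges over precisely the type of summands for which the hypothesis furnishes a bound. The two small checks to make before concluding are: (a) that $T$ being graded ensures $T(A)\in {\cal A}_k(U_2)$, so the $B^r$ norm on the right is the one featured in the statement; and (b) that the orders $j$ arising in the chosen decomposition, namely $0\le j \le s$, all lie within the range covered by the hypothesis. Both are immediate from the setup, and the estimate then extends by density and continuity to all of ${\cal A}_k(U_1)$ as needed for applications.
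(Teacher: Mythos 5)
Your proof is correct and is essentially the standard argument the paper intends (the paper itself only cites \cite{OC}, Theorem 2.7.3): take a near-optimal admissible decomposition realizing the $B^s$ infimum, distribute $T$ by linearity, apply the triangle inequality for $\|\cdot\|_{B^r}$ and the hypothesized bound on each difference chain, and let $\epsilon\to 0$. No gaps; the closing remark about extending by density is not needed for the statement itself, which concerns only Dirac chains $A\in{\cal A}_k(U_1)$.
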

      See \cite{OC}, Theorem 2.7.3.

   Define \( E_v (p;\a) = (p;v \wedge \a) \).  Suppose \( \b = v_1 \wedge \cdots \wedge v_s \) is simple.  Define
	  \( E_\b = E_{v_s} \circ \cdots \circ E_{v_1} \)    and \( i_\b = i_{v_1} \circ \cdots \circ i_{v_s} \).    
	
\begin{thm}\label{thm:A}
	 Fix \( \b \in \L_s(\R^n) \). Then \( E_\b: \hat{\cal B}_k^r(U) \to \hat{\cal B}_{k+s}^r(U)   \) and \( i_\b: {\cal B}_k^{r+s}(U) \to {\cal B}_k^r(U)  \) are continuous
		graded operators with \( \cint_{E_\b J} \o = \cint_J i_\b \o \). Furthermore, \(  \|E_\b(J)\|_{B^r} \le  \|\b\|\|J\|_{B^r} \).
\end{thm}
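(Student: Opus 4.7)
The plan is to apply Lemma \ref{lem:test} with that lemma's parameters both set to our $r$, and $C = \|\b\|$. The key structural fact is that $E_\b$ acts only on the multivector part of a $k$-element and therefore commutes with every translation $T_u$. Consequently $E_\b$ commutes with $\D_u$ on Dirac chains, and by induction with $\D_{\s^j}$ for every $\s^j \in S^j(\R^n)$. The convex hull of the support is also unchanged, so $E_\b$ preserves the ``inside $U$'' condition. For every $j$-difference $k$-chain $\D_{\s^j}(p;\a)$ inside $U$ we therefore have
\[ E_\b(\D_{\s^j}(p;\a)) = \D_{\s^j}(p; \b \wedge \a), \]
which is itself a $j$-difference $(k+s)$-chain inside $U$.

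Using the representative consisting of this single term in Definition \ref{def:norms}, together with the standard submultiplicativity $\|\b \wedge \a\| \le \|\b\|\,\|\a\|$ of the mass norm on $\L(\R^n)$, we obtain
\[ \|E_\b(\D_{\s^j}(p;\a))\|_{B^r} \le \|\s\|\,\|\b \wedge \a\| \le \|\b\|\,\|\s\|\,\|\a\| \]
for every $0 \le j \le r$. Lemma \ref{lem:test} then yields $\|E_\b(A)\|_{B^r} \le \|\b\|\,\|A\|_{B^r}$ for every $A \in {\cal A}_k(U)$, and by density and continuity $E_\b$ extends to a continuous graded operator $\hat{\cal B}_k^r(U) \to \hat{\cal B}_{k+s}^r(U)$ satisfying the claimed norm bound.

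Finally, the integration pairing $\cint_{E_\b J}\o = \cint_J i_\b \o$ is verified first on a Dirac chain $(p;\a)$, where it reduces to the pointwise identity $\o(p)(\b \wedge \a) = (i_\b \o)(p)(\a)$ --- the defining property of the interior product on $\L(\R^n)$ --- and is then extended by linearity, continuity, and density of ${\cal A}_k(U)$ in $\hat{\cal B}_k^r(U)$. Continuity of $i_\b$ on the form side is automatic once the chain-side bound is in hand: $i_\b$ is the Banach-space adjoint of $E_\b$ under the duality $(\hat{\cal B}_k^r(U))' \cong {\cal B}_k^r(U)$ from Theorem 2.8.2 of \cite{OC}, so its operator norm is bounded by $\|E_\b\| \le \|\b\|$. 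I expect no serious obstacle: the proof really requires only verifying the commutation $E_\b \D_{\s^j} = \D_{\s^j} E_\b$ and the preservation of the ``inside $U$'' condition, both of which follow essentially by inspection from the definitions, and then feeding the result into Lemma \ref{lem:test} together with the duality theorem.
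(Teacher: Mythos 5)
Your proof is correct. Note that the paper itself gives no argument for this statement --- it simply cites Theorem 3.1.3 of \cite{OC} --- so there is no in-text proof to compare against; but your route is exactly the one the paper sets up for such verifications: check the bound on $j$-difference chains and feed it into Lemma \ref{lem:test}, then obtain $i_\b$ as the adjoint of $E_\b$ under the duality $(\hat{\cal B}_k^r(U))' \cong {\cal B}_k^r(U)$. The two key observations (that $E_\b$ commutes with translation, hence with $\D_{\s^j}$ and preserves the ``inside $U$'' condition, and that mass is submultiplicative under wedge) are both right. Two cosmetic points: with the paper's convention $E_\b = E_{v_s}\circ\cdots\circ E_{v_1}$ the image of $(p;\a)$ is $(p; v_s\wedge\cdots\wedge v_1\wedge\a)$, which differs from $(p;\b\wedge\a)$ by a sign irrelevant to all norm estimates; and your adjoint argument actually yields $i_\b:{\cal B}_{k+s}^r(U)\to{\cal B}_k^r(U)$ with no loss of differentiability, which is at least as strong as (and corrects the apparent index typo in) the stated $i_\b:{\cal B}_k^{r+s}(U)\to{\cal B}_k^r(U)$.
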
  
   See \cite{OC} Theorem 3.1.3 for a proof.
		
						    Boundary \( \p = \p_k \) coincides with the classical boundary operator \( \p_k \) on polyhedral $k$-chains, which are dense in \( \hat{\cal B}_k(U) \), although a direct definition using ``higher order Dirac chains'' is provided in \cite{OC} \S 4.   
						
\begin{thm}\label{thm:B} [General Stokes' Theorem]
The boundary operator on differential chains \( \p:\hat{\cal B}_{k}^r  \to  \hat{\cal B}_{k-1}^{r+1} \) is continuous, \( \p \circ \p = 0 \), and       										  \( \|\p J\|_{B^{r+1}} \le kn \|J\|_{B^r} \)   for all \( J \in \hat{\cal B}_k^r \).   Furthermore, exterior derivative  \( d: {\cal B}_{k-1}^{r+1}(U) \to {\cal B}_k^r(U) \) is continuous. If \( \o \in {\cal B}_{k-1}^r \) is a differential form and \( J \in \hat{\cal B}_{k}^{r-1} \) is a differential chain, then   \[ \cint_{\p J} \o = \cint_J d \o. \]
\end{thm}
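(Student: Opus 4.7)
The plan is to define $\partial$ on simple Dirac chains by first-order difference quotients, verify the norm bound via the test Lemma \ref{lem:test}, and extend by density to $\hat{\cal B}_k^r(U)$; the Stokes pairing then follows from a computation on Dirac chains and a final continuity step. Explicitly, for a simple $k$-element $(p;\alpha)$ with $\alpha=v_1\wedge\cdots\wedge v_k$, writing $\hat\alpha_i:=v_1\wedge\cdots\widehat{v_i}\cdots\wedge v_k$, I would set
$$\partial(p;\alpha):=\sum_{i=1}^k(-1)^{i-1}\lim_{t\to 0^+}\frac{1}{t}\bigl((p+tv_i;\hat\alpha_i)-(p;\hat\alpha_i)\bigr)$$
and extend by multilinearity to ${\cal A}_k(U)$. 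Each bracket is a $1$-difference chain with $B^1$ seminorm $\|v_i\|\|\hat\alpha_i\|$ independent of $t$, and one checks that the scaled net is Cauchy in $\hat{\cal B}_{k-1}^1(U)$, so the limit is a bona fide element of that Banach space.

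The next step is the norm estimate. For simple $\alpha$, Gram--Schmidt produces an orthogonal frame representing the same $k$-vector, for which $\sum_i\|v_i\|\|\hat\alpha_i\|=k\|\alpha\|$; an accounting through an orthonormal basis of $\Lambda_k(\R^n)$ together with the subadditivity of mass then extends this to $\|\partial(p;\alpha)\|_{B^1}\le kn\|\alpha\|$ for general $\alpha$. Because translation commutes with $\partial$, the difference operators $\Delta_{\sigma^j}$ do as well, and the same argument lifts the bound to $\|\partial\Delta_{\sigma^j}(p;\alpha)\|_{B^{j+1}}\le kn\|\sigma\|\|\alpha\|$ for any $\sigma^j\in S^j(\R^n)$. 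Lemma \ref{lem:test} with $s=r$ then promotes this to $\|\partial A\|_{B^{r+1}}\le kn\|A\|_{B^r}$ for all $A\in{\cal A}_k(U)$, and density of ${\cal A}_k(U)$ in $\hat{\cal B}_k^r(U)$ extends $\partial$ continuously to $\hat{\cal B}_k^r(U)\to\hat{\cal B}_{k-1}^{r+1}(U)$. The identity $\partial\circ\partial=0$ I would verify on a simple Dirac chain by iterating the limit formula: the resulting second-order differences, indexed by ordered pairs $(i,j)$, cancel pairwise by antisymmetry of $\wedge$, after which continuity and density transfer the identity to $\hat{\cal B}_k^r$. Continuity of $d:{\cal B}_{k-1}^{r+1}(U)\to{\cal B}_k^r(U)$ is a direct norm estimate, since $d$ is expressible through first partial derivatives of coefficient functions.

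For Stokes, on a simple Dirac chain continuity of $\cint$ yields
$$\cint_{\partial(p;\alpha)}\omega=\sum_{i=1}^k(-1)^{i-1}\lim_{t\to 0^+}\frac{\omega(p+tv_i)(\hat\alpha_i)-\omega(p)(\hat\alpha_i)}{t}=\sum_{i=1}^k(-1)^{i-1}({\cal L}_{v_i}\omega)(p)(\hat\alpha_i),$$
which equals $(d\omega)(p)(v_1,\ldots,v_k)=\cint_{(p;\alpha)}d\omega$ by the Cartan formula with vanishing Lie brackets between the constant vector fields $v_i$. Bilinearity, joint continuity of $\cint$ in both arguments, and density of Dirac chains in $\hat{\cal B}_k^{r-1}(U)$ then extend the equality to all $J\in\hat{\cal B}_k^{r-1}(U)$ and $\omega\in{\cal B}_{k-1}^r(U)$. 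The hardest step will be the general-$\alpha$ case of the norm estimate: Gram--Schmidt handles only simple $k$-vectors cleanly, so extracting the precise constant $kn$ for non-simple $\alpha$ requires a careful decomposition that balances the $k$ summands of the boundary formula against the basis directions of $\Lambda_k(\R^n)$ while preserving the mass norm.
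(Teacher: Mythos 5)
The paper does not actually prove Theorem \ref{thm:B}; it defers entirely to \cite{OC} (Theorems 4.1.3 and 4.1.6), where $\partial$ is assembled from the directional-derivative operators $P_v$ of \S 2.1 acting on higher-order Dirac chains. Your reconstruction is in that same spirit, and most of it is sound in outline: the formula $\partial(p;\alpha)=\sum_i(-1)^{i-1}P_{v_i}(p;\hat\alpha_i)$ is multilinear and alternating in $(v_1,\dots,v_k)$, so it descends to $\Lambda_k(\R^n)$; $\partial\circ\partial=0$ does follow from commutativity of translations against the antisymmetric signs; and the Stokes pairing is the classical coordinate formula for $d$ with constant vector fields. (Incidentally, your worry about the constant is misplaced: since mass is an infimum over simple decompositions, your argument yields the constant $k$, which is stronger than the stated $kn$.)

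The genuine gap is the convergence claim: the scaled net $\frac{1}{t}\bigl((p+tv;\hat\alpha_i)-(p;\hat\alpha_i)\bigr)$ is \emph{not} Cauchy in $\hat{\cal B}_{k-1}^1(U)$. Writing $t=ms$, one finds $\frac1t\Delta_{tv}(p;\beta)-\frac1s\Delta_{sv}(p;\beta)=\frac{1}{ms}\sum_{j=0}^{m-1}\Delta_{jsv}\Delta_{sv}(p;\beta)$, whose $B^2$ norm is $O(t)$ but whose $B^1$ norm is only $O(1)$; and no sharper estimate can exist, since pairing the net with a Lipschitz $(k-1)$-form whose coefficient behaves like $x_1\sin(\log|x_1|)$ near $p$ gives a divergent difference quotient, so the net is not even weakly convergent against $(\hat{\cal B}^1_{k-1})'\cong{\cal B}^1_{k-1}$. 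The limit exists one order higher, in $\hat{\cal B}^{j+2}$ for an order-$j$ input (equivalently, in the inductive limit $\hat{\cal B}_{k-1}$). So after Lemma \ref{lem:test} you hold uniform $B^{r+1}$ bounds on the approximants but convergence only in $B^{r+2}$, and you still owe the argument that a $B^{r+1}$-bounded net converging in $B^{r+2}$ has its limit in $\hat{\cal B}^{r+1}_{k-1}$ with the limiting norm bound --- e.g.\ that closed balls of $\hat{\cal B}^{r+1}$ are closed in $\hat{\cal B}^{r+2}$, which one proves through the duality with ${\cal B}^{r+1}$ forms. That step is precisely the content the citation to \cite{OC} is carrying, and without it the operator does not land in the asserted space. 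A second, smaller issue: your pointwise Stokes computation differentiates $\omega$ at $p$, which is legitimate only for $r\ge2$; the stated pairing includes $\omega\in{\cal B}^1_{k-1}$ (merely Lipschitz) against $J\in\hat{\cal B}^{0}_{k}$, where the directional derivative at the given point need not exist and the identity must instead be obtained by approximating $\omega$ in a weaker norm or restricting to the dense smooth subspace.
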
 
   	  See \cite{OC} (Theorems 4.1.3 and 4.1.6).
										  
 											 We say a differential $k$-chain \( \g \in \hat{\cal B}_k(U)  \) is a \emph{differential	$k$-cycle} if $\p \g = 0.$

		Differential $0$-forms \( f  \) determine functions \( f:U \to \R \) by \( f(p) := f(p;1) \).  We say that the function \( f \) is \emph{of class \( B^r \)} if the $0$-form \( f \in {\cal B}_0^r \).  In (see \cite{OC} \S 2.9 ) we prove that \( f \) is of class \( B^r \) if and only if  \( f \) is of class \( C^{r-1+Lip} \), i.e., \( f \) is  \( (r-1) \)-times continuously differentiable, and its  \( (r-1) \)-st directional derivatives are Lipschitz. 	 For each \( f:U \to \R   \) of class \( B^r \) define a map \( m_f:{\cal A}_k(U) \to {\cal A}_k(U) \) by \( m_f(p;\a) := (p; f(p)\a) \) for simple $k$-elements \( (p;\a) \), \( p \in U \) and linearly extend.   

\begin{thm}\label{thm:C}
If \( f \) is of class \( B^r \), then  the linear map \( m_f: \hat{\cal B}_k^r(U)  \to \hat{\cal B}_k^r(U) \) is well-defined and continuous.   The topological vector space of	differential chains \( \hat{\cal{B_k}}^r(U)  \) is therefore a bigraded module over the ring of operators \( {\cal B}_0^r(U) \).    Furthermore, 
										  \( \|m_f A\|_{B^r} \le nr \|f\|_{B^r} \|A\|_{B^r}  \) and \( \p m_f =  m_f \p  +  m_{df} \). 
\end{thm}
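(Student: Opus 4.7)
The plan is to invoke Lemma~\ref{lem:test} with $s=r$, reducing continuity of $m_f$ to the single estimate
\[
\|m_f(\Delta_{\sigma^j}(p;\alpha))\|_{B^r} \le nr\,\|f\|_{B^r}\,\|\sigma\|\,\|\alpha\|
\]
for every $j$-difference $k$-chain inside $U$ with $0\le j\le r$. Density of ${\cal A}_k(U)$ in $\hat{\cal B}_k^r(U)$ then simultaneously yields well-definedness of $m_f$ and the stated operator bound.

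To prove the estimate, I would first derive a discrete Leibniz rule at the level of Dirac chains. A direct calculation gives the commutator $m_f T_u - T_u m_f = G_{u,f}\,T_u$, where $G_{u,f}(p;\alpha):=(p;(f(p+u)-f(p))\alpha)$. Iterating through the factorization $\Delta_{\sigma^j} = (T_{u_j}-I)\circ\cdots\circ(T_{u_1}-I)$ expands $m_f\,\Delta_{\sigma^j}(p;\alpha)$ as a sum of $2^j$ terms indexed by subsets $A\subseteq\{1,\dots,j\}$: in each term the factors $u_i$ with $i\in A$ assemble into a mixed $|A|$-th order forward finite difference of $f$ along those directions (evaluated at a translate of $p$), while the factors with $i\notin A$ form a $(j-|A|)$-difference chain built from $(p;\alpha)$. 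Since $f\in{\cal B}_0^r(U)$ and $|A|\le r$, the scalar factor is bounded in absolute value by $\|f\|_{B^{|A|}}\prod_{i\in A}\|u_i\|$, via the $C^{r-1+\mathrm{Lip}}$ description of $B^r$ functions recalled in \S\ref{sub:constructive_defn}; the accompanying difference chain trivially has $B^r$ norm at most $\prod_{i\notin A}\|u_i\|\,\|\alpha\|$. Convexity of the convex hull of $\mathrm{supp}(\Delta_{\sigma^j}(p;\alpha))$ keeps each piece of the expansion inside $U$, so the estimate is legitimate in $B^{r,U}$. Summing yields a constant at most $2^r\|f\|_{B^r}$; sharpening to the announced $nr\|f\|_{B^r}$ should follow by organizing the expansion coordinate-wise in $\R^n$, since only $n$ linearly independent directions can contribute nontrivially to a difference of any fixed order.

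The boundary identity follows by duality. For a Dirac chain $J$ and test form $\omega\in{\cal B}_{k-1}^{r+1}(U)$, the General Stokes' Theorem (Theorem~\ref{thm:B}) together with the pointwise Leibniz rule $d(f\omega)=df\wedge\omega+f\,d\omega$ gives
\[
\cint_{\p m_f J}\omega-\cint_{m_f\p J}\omega=-\cint_J df\wedge\omega,
\]
identifying $\p m_f-m_f\p$ with $\pm m_{df}$ once $m_{df}$ is defined as the $(k-1)$-chain operator dual (up to sign conventions) to wedging test forms with $df$. Non-degeneracy of the integral pairing and density of Dirac chains extend the identity to all of $\hat{\cal B}_k^r(U)$. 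The module structure over ${\cal B}_0^r(U)$ is then immediate from continuity of $m_f$ in both $f$ and $J$ together with the obvious relation $m_{fg}=m_f m_g$ on Dirac chains. The principal obstacle I foresee is the combinatorial bookkeeping in the iterated Leibniz expansion, and more pointedly the sharpening of the naive $2^r$ constant to the stated $nr$; this last quantitative refinement is the only delicate step in an otherwise conceptually routine argument.
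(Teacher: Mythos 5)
The paper offers no internal proof of Theorem \ref{thm:C}: it defers entirely to \cite{OC}, Theorems 5.1.2 and 5.1.3. Your strategy --- reduce to a single estimate on $j$-difference chains via Lemma \ref{lem:test} with $s=r$, prove that estimate by a discrete Leibniz expansion of $m_f\D_{\s^j}(p;\a)$, and obtain $\p m_f = m_f\p + m_{df}$ by duality against test forms --- is exactly the route the paper's toolkit is built for, and it matches the method of the cited source in outline. One small repair: with your definition $G_{u,f}(p;\a)=(p;(f(p+u)-f(p))\a)$ the commutator is $m_fT_u-T_um_f=T_uG_{u,f}$, not $G_{u,f}T_u$. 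More importantly, you should make explicit that in the term indexed by $A\subseteq\{1,\dots,j\}$ the mixed difference of $f$ is evaluated at a single translated base point, so that each term is a genuine \emph{constant} multiple of a difference chain; a coefficient varying over the support would not be an admissible generator in Definition \ref{def:norms}, and this is the one place where the "conceptually routine" expansion actually has to be checked.

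The genuine gap is quantitative. Your expansion gives the constant $2^r\|f\|_{B^r}$ ($2^j$ subsets, each contributing at most $\|f\|_{B^{|A|}}\|\s\|\|\a\|$), and the proposed sharpening to $nr\|f\|_{B^r}$ does not work as described: the directions $u_1,\dots,u_j$ in a difference chain are arbitrary vectors, not coordinate directions, and mixed differences of order exceeding $n$ or taken along linearly dependent directions are not zero, so "organizing coordinate-wise" does not prune the subset count. This is harmless for every qualitative assertion (well-definedness, continuity, the module structure via $m_{fg}=m_fm_g$), but it means the inequality is proved only with a worse constant than the stated $nr$; recovering the stated constant requires the finer bookkeeping of \cite{OC}. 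On the boundary identity, the duality computation correctly yields $\cint_{(\p m_f-m_f\p)J}\o=-\cint_J df\wedge\o$, but this only identifies $\p m_f-m_f\p$ with the predual of $\o\mapsto -df\wedge\o$; you still must check that the concretely defined operator $m_{df}$ has exactly that dual (a pointwise computation on Dirac chains), fix the sign, and mind the regularity bookkeeping --- pairing $\p m_fJ$ with $\o\in{\cal B}_{k-1}^{r+1}$ forces $f\o$ to be a class $B^{r+1}$ test form, one degree more than the hypothesis on $f$ provides, so the extension from Dirac chains should be run at the correct indices rather than waved through by "density plus nondegeneracy."
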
											
	     For a proof see  \cite{OC} Theorems 5.1.2 and 5.1.3.
  
\subsection{Mappings and the pushforward operator}  
\label{sub:operators1}  
 Suppose \( U_1 \subset \R^n\) and \( U_2 \subset \R^m \) are open  and \( F:U_1 \to
U_2 \) is a differentiable map. For \( p \in U_1 \), define \emph{linear pushforward} \( F_{p*}(v_1 \wedge \cdots \wedge
v_k) := DF_p(v_1) \wedge \cdots \wedge DF_p (v_k) \) where \( DF_p \) is the total
derivative of \( F \) at \( p \).    

 Define  \( F_*(p; \a) := (F(p), F_{p*}\a) \) for all simple $k$-elements \( (p;\a) \) and extend  to a linear map \( F_*:{\cal A}_k(U_1) \to {\cal A}_k(U_2) \) called \emph{pushforward}.     

\begin{defn}\label{def:Mr} \mbox{} \\
  Let \( {\cal M}^r(U, \R^m) \) be the vector space of differentiable maps \( F:U \to \R^m \) so that the  directional derivatives \( L_{e_j} F_i \) of its coordinate functions \( F_i \)  are of class \( B^{r-1} \), for \( r \ge 1 \). Define the seminorm  \(  |F|_{D^{r,U}} := \max_{i,j} \{\|L_{e_j} F_i\|_{B^{r-1, U}}\} \).   
	 \end{defn}       We write \(   |F|_{D^r} =   |F|_{D^{r,U}} \) when \( U \) is understood.  It is not hard to show that \( |\cdot|_{D^r} \) is a seminorm, but not a norm, on the vector space \( {\cal M}^r(U, \R^m) \).  Let \( {\cal M}^\i(U, \R^m) \) be the projective limit of \( {\cal M}^r(U, \R^m) \).     Let \( {\cal M}^r(U_1, U_2) \) denote the subset   \( \{F \in {\cal M}^r(U_1, \R^m) : F(U_1) \subset U_2 \subset \R^m\} \) where \( U_2 \) is open for all \( 0 \le r \le \i \).  If \( U \subset \R^n \) is a regular open set, we can similarly define  \( {\cal M}^r(\overline{U}, \R^m) \)  as the space of maps \( F:\overline{U} \to \R^m \) which extend to maps defined in a neighborhood of \( \overline{U} \).   

\begin{thm}\label{thm:D}
 If \( F\in {\cal M}^r(U_1, U_2) \), then   
	\[
	\|F_*(A)\|_{B^{r,U_2}} \le  mn\max \{ 1, r  |F|_{D^{r,U_1}}\} \|A\|_{B^{r,U_1}} \] for all
	\( A \in {\cal A}_0(U_1) \) and all \( r \ge 0 \). 
	and thus determines a continuous linear map \( F_*: \hat{\cal B}_k^r(U_1) \to \hat{\cal B}_k^r(U_2) \) with \( \cint_{F_* J} \o = \cint_J F^* \o \)  for all \( J \in \hat{\cal B}_k^r(U_1) \) and \( \o \in {\cal B}_k^r(U_2) \).  Furthermore, \( \p \circ F_* = F_* \circ \p \).
\end{thm}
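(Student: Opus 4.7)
The plan is to use Lemma \ref{lem:test} with $s=r$ to reduce the norm inequality to the case of a single $j$-difference chain $\Delta_{\sigma^j}(p;\alpha)$ inside $U_1$ with $0\le j\le r$. Once the bound $\|F_*(\Delta_{\sigma^j}(p;\alpha))\|_{B^{r,U_2}}\le mn\max\{1, r|F|_{D^{r,U_1}}\}\|\sigma\|\|\alpha\|$ is established, Lemma \ref{lem:test} automatically upgrades it to the inequality for all $A\in\mathcal{A}_k(U_1)$, and then $F_*$ extends by continuity to a bounded linear map $\hat{\mathcal{B}}_k^r(U_1)\to\hat{\mathcal{B}}_k^r(U_2)$.

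To bound a single $j$-difference chain, I would Taylor-expand both $F$ and the components of $DF$ around $p$ to order $r$. Writing $\sigma^j = u_1\circ\cdots\circ u_j$, the pushforward
\[
F_*\Delta_{\sigma^j}(p;\alpha) \;=\; \sum_{S\subset\{1,\ldots,j\}}(-1)^{j-|S|}\bigl(F(p+\textstyle\sum_{i\in S}u_i);\,F_{p+\sum_{i\in S}u_i,*}\alpha\bigr)
\]
should split into a principal $j$-difference chain based at $(F(p);F_{p*}\alpha)$ inside $U_2$, whose $B^r$-weight is at most $\|F_{p*}\alpha\|\|\sigma\|$, plus a remainder assembled as a finite sum of $r$-difference chains whose combined $B^r$-weight is controlled linearly by $r|F|_{D^{r,U_1}}\|\sigma\|\|\alpha\|$. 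The mass of $F_{p*}\alpha$ is bounded by a combinatorial constant of order $mn$ times $\|\alpha\|$, using that $F_{p*}$ is the $k$-th exterior power of $DF_p$ and that the entries of $DF_p$ are bounded by $|F|_{D^{r,U_1}}$. Assembling the two contributions yields the stated factor $mn\max\{1, r|F|_{D^{r,U_1}}\}$, with the $r$ arising from the number of directional-derivative expansions that must be absorbed into $r$-difference chains.

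The delicate step I anticipate is the combinatorial bookkeeping: the Taylor remainder must be regrouped into genuine $r$-difference chains whose supports have convex hulls inside $U_2$, not merely into formal iterated differences. The base points $p+\sum_{i\in S}u_i$ all lie in $U_1$ by hypothesis that $\Delta_{\sigma^j}(p;\alpha)$ is inside $U_1$, and their images lie in $U_2$ because $F(U_1)\subset U_2$; if the convex hull condition in $U_2$ requires localizing further, one can subdivide $\sigma$ without worsening the constant.

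With the norm estimate in place, the remaining assertions follow formally by density of $\mathcal{A}_k(U_1)$ in $\hat{\mathcal{B}}_k^r(U_1)$. The identity $\cint_{F_*(p;\alpha)}\omega = \omega(F(p))(F_{p*}\alpha) = (F^*\omega)(p;\alpha)$ holds on simple elements by the classical definition of pullback; since $F^*\omega\in\mathcal{B}_k^r(U_1)$ with norm controlled by $|F|_{D^{r,U_1}}\|\omega\|_{B^{r,U_2}}$ (the dual side of the very bound just proved, verified directly on smooth forms by the chain rule and Theorem \ref{thm:C}), both sides of $\cint_{F_*J}\omega = \cint_J F^*\omega$ are continuous in $J$, so the identity extends by density. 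Finally, $\partial\circ F_* = F_*\circ\partial$ holds classically on polyhedral chains, and both composites are continuous $\hat{\mathcal{B}}_k^r(U_1)\to\hat{\mathcal{B}}_{k-1}^{r+1}(U_2)$ by Theorem \ref{thm:B} together with the continuity of $F_*$, so density of polyhedral chains concludes the argument.
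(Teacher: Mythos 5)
The paper does not prove this theorem at all --- it is imported from \cite{OC} (Theorems 7.5.4 and 7.5.5) with only a citation --- so there is no in-paper argument to measure you against. Your overall strategy, reducing via Lemma \ref{lem:test} to a single difference chain and then comparing \( F_*\Delta_{\sigma^j}(p;\alpha) \) with a linearized pushforward, is the natural one and is in the spirit of the cited proof. But the proposal has a genuine gap at precisely the point where all the work lies.

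You assert that \( F_*\Delta_{\sigma^j}(p;\alpha) \) ``splits into a principal \( j \)-difference chain based at \( (F(p);F_{p*}\alpha) \) \ldots plus a remainder assembled as a finite sum of \( r \)-difference chains'' with controlled weight, but nothing in the sketch produces such a decomposition. The \( 2^j \) image points \( F(p+\sum_{i\in S}u_i) \) do not form a difference chain in \( U_2 \) (the translations are not by fixed vectors), the \( k \)-vector \( F_{q*}\alpha \) varies with the base point \( q \), and converting these discrepancies into admissible difference chains of order at most \( r \) whose convex hulls lie in \( U_2 \) requires a double induction on \( j \) and \( r \); calling this ``combinatorial bookkeeping'' understates that it is the entire content of the cited theorem. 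Worse, your own accounting of the constant does not close: you correctly observe that \( F_{p*} \) is the \( k \)-th exterior power of \( DF_p \), whence \( \|F_{p*}\alpha\| \) carries a factor on the order of \( |F|_{D^r}^{k} \), and each translation vector of the image difference chain carries a further factor of \( |F|_{D^r} \); the resulting bound is of order \( |F|_{D^r}^{k+j}\|\sigma\|\|\alpha\| \), which cannot be absorbed into the linear factor \( mn\max\{1,r|F|_{D^r}\} \) that you claim to derive (test \( F=\lambda I \) with \( \lambda \) large and \( k\ge 1 \)). So the sketch does not establish the inequality in the form stated, and the hard estimate is assumed rather than proved. The remaining assertions would indeed follow by density once the norm bound is in hand, though for \( \partial\circ F_*=F_*\circ\partial \) it is cleaner to argue by duality from \( \cint_{F_*J}\o=\cint_J F^*\o \), Stokes' theorem, \( dF^*=F^*d \), and nondegeneracy of the pairing than to invoke polyhedral chains, whose pushforwards under a nonlinear \( F \) are not polyhedral.
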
	
   For a proof see \cite{OC} Theorems 7.5.4 and 7.5.5.
	
\begin{thm}\label{thm:E}
Any affine $k$-cell \( \t \)  in \( U \) is represented by a unique differential $k$-chain \( \widetilde{\t} \in \hat{\cal B}_k(U) \)  such that  \( \cint_{\widetilde{\t}} \o = \int_\t \o \) for all \( \o \in {\cal B}_k^1(U) \).
\end{thm}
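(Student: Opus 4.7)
The plan is to realize $\widetilde{\tau}$ as a norm limit in $\hat{\cal B}_k^1(U)$ of Riemann-sum Dirac chains supported on the cell, and to derive uniqueness from nondegeneracy of the integral pairing. Let $P$ be the affine $k$-plane containing the affine $k$-cell $\tau$, let $\a \in \L_k(\R^n)$ be the unit simple $k$-vector representing the orientation of $P$, and write $|\tau|$ for the $k$-dimensional Lebesgue measure of $\tau$ in $P$.

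For each $N \ge 1$ fix a finite partition $\mathcal{P}_N = \{\tau_{N,i}\}$ of $\tau$ into affine $k$-subcells, with volumes $V_{N,i}$, sample points $p_{N,i} \in \tau_{N,i}$, and mesh $\d_N := \max_i \mathrm{diam}(\tau_{N,i}) \to 0$. Set
\[
A_N := \sum_i V_{N,i}\,(p_{N,i};\a) \in {\cal A}_k(U).
\]
I first show $(A_N)$ is Cauchy in the $B^1$ norm. Given $M,N$, pass to a common refinement $\mathcal{P}_{MN}$; writing $\tau_{N,i} = \bigsqcup_j \tau_{MN,ij}$ with volumes $V_{MN,ij}$ summing to $V_{N,i}$ and sample points $p_{MN,ij}$, one has
\[
A_{MN} - A_N \;=\; \sum_{i,j} V_{MN,ij}\,\bigl[(p_{MN,ij};\a) - (p_{N,i};\a)\bigr] \;=\; \sum_{i,j} V_{MN,ij}\,\D_{p_{MN,ij}-p_{N,i}}(p_{N,i};\a).
\]
Each summand is a $1$-difference chain whose convex support hull lies in the convex cell $\tau_{N,i} \subset \tau \subset U$, hence is inside $U$, with weight $\|p_{MN,ij}-p_{N,i}\|\,\|\a\| \le \d_N$. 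Therefore $\|A_N - A_{MN}\|_{B^1} \le |\tau|\,\d_N$, and by the triangle inequality $\|A_N - A_M\|_{B^1} \le |\tau|(\d_N + \d_M) \to 0$. Completeness of $\hat{\cal B}_k^1(U)$ yields
\[
\widetilde{\tau} := \lim_N A_N \in \hat{\cal B}_k^1(U) \hookrightarrow \hat{\cal B}_k(U).
\]

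For the integral identity, fix $\o \in {\cal B}_k^1(U)$. The function $p \mapsto \o(p;\a)$ is Lipschitz on $\tau$, so its Riemann sums converge to the classical integral:
\[
\cint_{A_N}\o \;=\; \sum_i V_{N,i}\,\o(p_{N,i};\a) \;\longrightarrow\; \int_\tau \o .
\]
On the other hand, continuity of the pairing $\cint:\hat{\cal B}_k^1(U) \otimes {\cal B}_k^1(U) \to \R$ forces $\cint_{A_N}\o \to \cint_{\widetilde{\tau}}\o$, so the two limits agree. Independence of $\widetilde{\tau}$ from the choice of partitions and sample points is immediate, since any two admissible sequences $(A_N),(A_N')$ interleave into a single Cauchy sequence by the same estimate. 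Finally, uniqueness of $\widetilde{\tau}$ within $\hat{\cal B}_k(U)$ follows from nondegeneracy of the integral pairing already on the subspace ${\cal B}_k(U) \subset {\cal B}_k^1(U)$.

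The only delicate step is the $B^1$ Cauchy estimate. It works because the difference of two Riemann-sum approximations factors through genuine $1$-difference chains whose $B^1$ weights are controlled by mesh size times total mass. The analogous estimate fails in the mass ($B^0$) norm, which is precisely why the existence of $\widetilde{\tau}$ requires the finer $B^1$ topology, even though the resulting chain is supported on the nominally measure-theoretic cell $\tau$.
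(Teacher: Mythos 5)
Your proof is correct and is essentially the construction the paper defers to (\cite{OC}, Theorem 2.11.2): realize the cell as the $B^1$-limit of Riemann-sum Dirac chains, with the Cauchy estimate obtained by rewriting the difference of two sums over a common refinement as first-order difference chains whose total $B^1$ weight is bounded by mesh times $k$-volume, and then derive the integral identity from continuity of the pairing and uniqueness from nondegeneracy (noting that any competitor lies in some $\hat{\cal B}_k^r(U)$ and ${\cal B}_k^r(U)\subset{\cal B}_k^1(U)$). Your closing remark that the estimate genuinely requires the $B^1$ rather than the mass norm is also the right diagnosis.
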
   
	   For a proof see \cite{OC}, Theorem 2.11.2.

   \subsection{Cartesian wedge product}  
   \label{sub:cartesian_product}  

	Suppose \( U_1  \subset \R^n \) and \( U_2 \subset \R^m \) are open.  Let \( \iota_1:  U_1 \to U_1 \times U_2  \) and \( \iota_2: U_2 \to U_1 \times U_2   \) be the inclusions \(  \iota_1(p) = (p,0)  \) and \(  \iota_2(q) = (0,q) \).   Let \( \pi_1:U_1 \otimes U_2 \to U_1 \) and \( \pi_2:U_1 \otimes U_2 \to U_2 \)  be the projections \( \pi_i(p_1,p_2) = p_i \), \( i = 1, 2 \).
	Let \( (p;\a) \in {\cal A}_k(U_1) \) and \( (q;\b) \in {\cal A}_\ell(U_2) \). Define \( \times: {\cal A}_k(U_1)
	\times {\cal A}_\ell(U_2) \to {\cal A}_{k+\ell}(U_1 \times U_2) \) by \( \times((p; \a), (q;
	\b)) := ((p,q); \iota_{1*}\a \wedge \iota_{2_*}\b) \) where \( (p;\a) \) and \( (q; \b) \) are $k$- and
	$\ell$-elements, respectively, and extend bilinearly. We call \( P \times Q = \times(P,Q)
	\) the \emph{Cartesian wedge product\footnote{By the universal property of tensor product,  \( \times \) factors through a continuous linear map  \emph{cross product}  \(   \tilde{\times}: \hat{\cal B}_j(U_1) \otimes\hat{\cal B}_k(U_2) \to  \hat{\cal B}_{j+k}(U_1 \times U_2)\).  This is closely related to the classical definition of \emph{cross product} on simplicial chains (\cite{hatcher}, p. 278) }} of \( P \) and \( Q \). Cartesian wedge product of
	Dirac chains is associative since wedge product is associative, but it is not graded
	commutative since Cartesian product is not graded commutative.  The next result shows that Cartesian wedge product is continuous.

\begin{thm}\label{thm:G}
 Cartesian wedge product \( \times: \hat{\cal B}_k^r(U_1) \times
	\hat{\cal B}_\ell^s(U_2) \to \hat{\cal B}_{k+\ell}^{r+s}(U_1 \times U_2) \) is associative, bilinear and continuous for all open sets \( U_1 \subset \R^n, U_2 \subset \R^m \) and
	satisfies
	 \begin{enumerate} 
			\item $\|J\times K\|_{B^{r+s, U_1 \times U_2}} \le \|J\|_{B^{r,U_1}} \|K\|_{B^{s,U_2}}$ 
			\item \(  \|\widetilde{(a,b)} \times  J\|_{B^{r, \R \times U_1}} \le  |b-a|\|J\|_{B^{r,U_1}} \)  where \( \widetilde{(a,b)} \) is the $1$-chain
			representing the interval \( (a,b) \).
			\item  \( \p(J \times K) = ( \partial J) \times K + (-1)^k J
	\times ( \partial K) \). 
	\end{enumerate}
\end{thm}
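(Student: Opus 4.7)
I would treat the three assertions in order. Bilinearity and associativity of $\times$ are inherited directly from the bilinearity and associativity of the wedge product together with the linearity of the inclusions $\iota_{i*}$, so they require no new work. The heart of the theorem is the norm bound (1); assertion (2) is a one-line consequence, and (3) reduces to a density/continuity argument built on (1) and Theorem \ref{thm:B}.

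For (1), the central observation is that the translations $T_{\iota_{1*}u}$ and $T_{\iota_{2*}v}$ commute in $\R^{n+m}$, so a straightforward induction on the orders $j$ and $i$ yields
\begin{equation*}
\Delta_{\s^j}(p;\a) \times \Delta_{\t^i}(q;\b) \;=\; \Delta_{(\iota_{1*}\s)\circ(\iota_{2*}\t)}\bigl((p,q);\,\iota_{1*}\a \wedge \iota_{2*}\b\bigr).
\end{equation*}
Thus a $j$-difference $k$-chain in $U_1$ Cartesian-wedged with an $i$-difference $\ell$-chain in $U_2$ is a single $(j+i)$-difference $(k+\ell)$-chain in $U_1 \times U_2$ of weight $\|\s\|\|\t\|\|\iota_{1*}\a \wedge \iota_{2*}\b\| \le \|\s\|\|\t\|\|\a\|\|\b\|$. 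Insideness is preserved because the convex hull of the support of the product lies in the Cartesian product of the convex hulls of the supports of the two factors. Given Dirac chains $A, B$ and near-optimal difference-chain decompositions witnessing $\|A\|_{B^r}$ and $\|B\|_{B^s}$, bilinear distribution yields a decomposition of $A \times B$ into difference chains of orders summing to at most $r+s$ inside $U_1 \times U_2$, whose total weight factors; taking infima gives $\|A \times B\|_{B^{r+s}} \le \|A\|_{B^r}\|B\|_{B^s}$. This extends by density to all $J \in \hat{\cal B}_k^r(U_1)$ and $K \in \hat{\cal B}_\ell^s(U_2)$, simultaneously establishing (1) and the joint continuity claimed. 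For (2), Theorem \ref{thm:E} provides $\widetilde{(a,b)} \in \hat{\cal B}_1(\R)$ with $\|\widetilde{(a,b)}\|_{B^0}=|b-a|$; since the $B^r$ norms are monotone decreasing in $r$, $\|\widetilde{(a,b)}\|_{B^r} \le |b-a|$, and (2) follows from (1) by specialization.

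For (3), I would first verify the Leibniz rule on polyhedral chains, where $\p$ acts as the classical oriented boundary and the identity reduces to the familiar rule $\p(\tau \times \rho) = \p\tau \times \rho + (-1)^{\dim \tau}\tau \times \p\rho$ for affine cells. By part (1) together with Theorem \ref{thm:B}, both sides of (3) are jointly continuous maps $\hat{\cal B}_k^r(U_1) \times \hat{\cal B}_\ell^s(U_2) \to \hat{\cal B}_{k+\ell-1}^{r+s+1}(U_1 \times U_2)$, so agreement on the dense subspace of polyhedral chains propagates to agreement everywhere. The main obstacle I expect is the book-keeping for the key combinatorial identity underpinning (1) --- in particular, verifying the insideness condition uniformly and pinning down the symmetric-product element $(\iota_{1*}\s)\circ(\iota_{2*}\t) \in S^{j+i}(\R^{n+m})$ cleanly; once that identity is in hand, the rest is a routine extension by density and continuity.
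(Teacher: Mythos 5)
The paper does not prove Theorem \ref{thm:G} at all; it simply cites \cite{OC}, Theorem 10.1.3, so there is no in-paper argument to compare against. Your proposal is essentially the proof one would expect that reference to contain, and it is sound: the identity $\Delta_{\s^j}(p;\a)\times\Delta_{\t^i}(q;\b)=\Delta_{(\iota_{1*}\s)\circ(\iota_{2*}\t)}\bigl((p,q);\iota_{1*}\a\wedge\iota_{2*}\b\bigr)$ is correct because translation by $u$ in the first factor is translation by $\iota_{1*}u$ in the product and such translations commute; the inclusions are isometries so $\|(\iota_{1*}\s)\circ(\iota_{2*}\t)\|=\|\s\|\|\t\|$; submultiplicativity of mass under wedge gives the weight bound; the support of the product difference chain is the Cartesian product of the factor supports, so insideness passes to $U_1\times U_2$; and distributing near-optimal decompositions and taking infima yields (1) on Dirac chains, whence joint continuity of the bilinear map and extension to the completions. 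Two small points deserve explicit attention. First, for (2) you should justify that $\|\widetilde{(a,b)}\|_{B^0}=|b-a|$ (mass of the representing chain equals the length of the interval) before invoking monotonicity of the $B^r$ norms; this is standard but is the actual content of (2). Second, in (3) your density argument tacitly uses that the representative of a product of affine cells is the Cartesian wedge product of the representatives, i.e.\ $\widetilde{\t}\times\widetilde{\rho}=\widetilde{\t\times\rho}$; this follows from Theorem \ref{thm:E} and Fubini via the nondegeneracy of the integral pairing, but it should be stated, since it is what lets you import the classical Leibniz rule for boundaries of cell products. With those two items filled in, the argument is complete and consistent with the machinery (Definition \ref{def:norms}, Lemma \ref{lem:test}, Theorems \ref{thm:B} and \ref{thm:E}) that the paper makes available.
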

        See \cite{OC} Theorem 10.1.3.
 
 \section{Poincar\'e Lemma} \label{PL}
\subsection{Chain homotopy}  
\label{sub:chain_homotopy}   Since \( \p_{k-1} \circ \p_k = 0 \),  
\begin{equation}\label{poly2}
 	\hat{\cal B}_n^s(U)   \buildrel \p_n \over \to \hat{\cal B}_{n-1}^{s+1}(U) \buildrel \p_{n-1} \over \to \cdots \buildrel \p_1 \over
	\to \hat{\cal B}_0^{s+n}(U) \buildrel \p \over \to \{0\}. 
\end{equation} is a bigraded chain complex.   Letting \( s \to \i \), the inductive limits
\begin{equation}\label{poly1}  
 	\hat{\cal B}_n(U)   \buildrel \p_n \over \to \hat{\cal B}_{n-1}(U) \buildrel \p_{n-1} \over \to \cdots \buildrel \p_1 \over
	\to \hat{\cal B}_0(U) \buildrel \p \over \to \{0\}. 
\end{equation} form a chain complex since \( \p_k \) is well-defined and continuous on the inductive limits.
 
A collection of maps \(  S_k: \hat{\cal B}_k^r(U_1) \to \hat{\cal B}_{k}^r(U_2)  \) is a \emph{graded map of chain complexes} if \( S_{k-1} \circ \p_k = \p_{k} \circ S_k  \).     

\begin{defn}\label{def:homo}  For \( U \subset \R^n \) open, the $k$-th ``differential homology'' group of the chain complex  \( (\hat{\cal B}_k(U), \p_k) \) is \[
 \hat{H}_k(U)    := \frac{\ker \p_k}{ \text{im\,} \p_{k+1}}. 
	\] 
\end{defn} 

Let \( U_\e \) be the \( \e \)-neighborhood of a smoothly embedded \( m \)-manifold \( M \) in \( \R^n \).  Define\footnote{In a sequel in preparation,  M.W. Hirsch and the author show that \( \hat{H}_k(M)   \) coincides with singular homology. They are developing a homology theory \( \hat{H}_k(J) \) for arbitrary differential $\ell$-chains \( J \in \hat{\cal B}_\ell(U) \) \( 0 \le \ell \le n \), but it is not yet clear which axioms of homology such a theory would satisfy.} \[  \hat{H}_k(M) := \varinjlim \hat{H}_k(U_\e)  \] where the inductive limit is taken as \( \e \to 0 \).
    
If \( F_k: \hat{\cal B}_k^r(U_1) \to \hat{\cal B}_k^r(U_2) \) is a graded map of chain complexes, then \( F_k \) induces a well-defined map \( \hat{H}_k(F_k): \hat{H}_k(U_1)  \to \hat{H}_k(U_2) \) with \( \hat{H}_k(F_k)[J] := \hat{H}_k[F_kJ] \) since \( F_k \) commutes with \( \p \). 

Two graded maps of complexes \( F_k, G_k:\hat{\cal B}_k^r(U_1) \to \hat{\cal B}_k^r(U_2)  \) are \emph{chain homotopic} through  a family of maps \(  K_k: \hat{\cal B}_k^r(U_1) \to \hat{\cal B}_{k+1}^r  \)  if  \( G_k - F_k =   \p K_k + K_{k-1} \p  \).  It is a standard result that if two  maps of complexes \( F_k, G_k \) are chain homotopic, then \( \hat{H}_k(F_k) = \hat{H}_k(G_k) \).

Two maps \( F_0, F_1:U_1 \subset \R^n \to U_2 \subset \R^m \) are \emph{\( B^r \) homotopic} if  there exists \( F:[0,1] \times U_1 \to U_2 \) with \( F(0,p) = F_0(p) \) and \( F(1;p) = F_1(p) \) and \( F \in {\cal M}^r([0,1] \times U_1 \to U_2) \).  We say \( U_1 \) is \( B^r \) \emph{contractible} in \( U_2 \) if \( U_1 \subset U_2 \subset \R^n \) are open and the inclusion  \( U_1 \hookrightarrow U_2 \) with \( p \mapsto p \in U_1 \) is \( B^r \) homotopic to a constant map  \( U_1 \twoheadrightarrow U_2 \) with \( p \mapsto c \in U_2 \).  

We may now state our main result. 

\begin{thm}\label{thm:poincarelemma}[Poincar\'e Lemma for Differential Chains] 
Let \( U_1 \subset U_2 \subset \R^n \) be open subsets where \( U_1 \) is \( B^r \) contractible in \( U_2 \).  Then for \( J \in \hat{\cal B}_k^r(U_1) \) with \( \p J = 0 \), there exists \( C \in \hat{\cal B}_{k+1}^r(U_2) \) with \( \p C = J \) for all \( 1 \le k \le n \) and \( 1 \le r \le \i \).  
\end{thm}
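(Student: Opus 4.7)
The plan is to realize the classical prism/cone construction entirely through the three operations already assembled in the preliminaries: Cartesian wedge product (Theorem \ref{thm:G}), pushforward (Theorem \ref{thm:D}), and boundary (Theorem \ref{thm:B}). I would define
\[
K(J) := F_*\bigl(\widetilde{[0,1]} \times J\bigr),
\]
where $\widetilde{[0,1]} \in \hat{\cal B}_1^r(\R)$ is the $1$-chain representing the unit interval (Theorem \ref{thm:E}) and $F:[0,1]\times U_1 \to U_2$ is the given $B^r$-homotopy from the inclusion to the constant map at $c$. Theorem \ref{thm:G}(1)--(2) will guarantee $\widetilde{[0,1]} \times J \in \hat{\cal B}_{k+1}^r([0,1]\times U_1)$ with $\|\widetilde{[0,1]} \times J\|_{B^r} \leq \|J\|_{B^r}$, and Theorem \ref{thm:D}, applied with $F \in {\cal M}^r([0,1]\times U_1, U_2)$, will carry this into $\hat{\cal B}_{k+1}^r(U_2)$. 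Hence $K$ will be a continuous operator $\hat{\cal B}_k^r(U_1) \to \hat{\cal B}_{k+1}^r(U_2)$.

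Next I would derive a chain-homotopy identity. Using the commutation $\partial \circ F_* = F_* \circ \partial$ (Theorem \ref{thm:D}) together with the Leibniz rule of Theorem \ref{thm:G}(3) applied to the $1$-chain $\widetilde{[0,1]}$,
\[
\partial K(J) = F_*\bigl(\partial\widetilde{[0,1]} \times J\bigr) - F_*\bigl(\widetilde{[0,1]} \times \partial J\bigr).
\]
Since $\partial\widetilde{[0,1]} = \widetilde{\{1\}} - \widetilde{\{0\}}$, each endpoint term $F_*(\widetilde{\{a\}} \times J)$ unpacks to the pushforward of $J$ by the slice map $\psi_a: p \mapsto F(a,p)$.

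Finally I would exploit the two defining properties of the contraction. For $a = 0$: $\psi_0$ is the inclusion $U_1 \hookrightarrow U_2$, so $F_*(\widetilde{\{0\}} \times J) = J$ after the continuous identification $\hat{\cal B}_k^r(U_1) \hookrightarrow \hat{\cal B}_k^r(U_2)$. For $a = 1$: $\psi_1$ is the constant map at $c$, whose total derivative vanishes identically, so $(\psi_1)_{p*} \alpha = 0$ for every $k$-vector $\alpha$ with $k \geq 1$; thus $F_*(\widetilde{\{1\}} \times J) = 0$. Combining these with the hypothesis $\partial J = 0$ collapses the identity to $\partial K(J) = -J$, so $C := -K(J) \in \hat{\cal B}_{k+1}^r(U_2)$ is the desired primitive. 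I expect the genuine difficulty to be entirely upstream, bundled into the continuity statements of Theorems \ref{thm:G} and \ref{thm:D}; here the only thing to watch is the sign bookkeeping coming from Theorem \ref{thm:G}(3) and the orientation convention for $\partial\widetilde{[0,1]}$, but any discrepancy is absorbed by replacing $C$ with $-C$.
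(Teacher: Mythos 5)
Your proposal is correct and follows essentially the same route as the paper: the paper defines exactly the operator $K = F_* \circ L$ with $L(J) = \tilde{I}\times J$, establishes the chain homotopy identity $\p K + K\p = f_{1*} - f_{0*}$ via Theorem \ref{thm:G}(3) and $\p\circ F_* = F_*\circ\p$ (Theorems \ref{lem:Kone} and \ref{thm:homotopy}), and then concludes on cycles using that the constant map kills $k$-vectors for $k\ge 1$ (the paper phrases this last step as $\hat{H}_k(c_*)=0$, hence $\hat{H}_k(U_1)=0$, but it is the same computation). Your sign bookkeeping, the continuity bound via Theorem \ref{thm:G}(2) and Theorem \ref{thm:D}, and the observation that the argument needs $k\ge 1$ all match the paper's treatment.
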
     

In order to prove this we first establish   Corollary \ref{cor:ppp} which says that pushforwards through homotopic maps are homotopic as maps of complexes.  

Let \( \tilde{I} \) denote the $1$-chain representing the interval \( [0,1] \subset \R \) (see Theorem \ref{thm:E}), and \( L = L_k: \hat{\cal B}_{k}^r( U_1) \to \hat{\cal B}_{k}^r((0,1) \times U_1) \) be Cartesian wedge product \( L_k(J) := \tilde{I} \times J \) (see Theorem \ref{thm:G}).   Recall that \( (t;1) \) is the simple unit $0$-element supported in \( t \in \R^1 \).

\begin{lem}\label{lem:cart}
\( \|L J\|_{B^r} \le \|J\|_{B^r} \)  and \( (\p L + L \p)(J) = (1;1) \times J - (0;1) \times J \).
\end{lem}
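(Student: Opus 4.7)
The plan is to read off both statements as direct consequences of Theorem~\ref{thm:G} together with the identification of the boundary of the interval chain $\tilde I$. Nothing subtle is needed; the two assertions of the lemma correspond essentially one-for-one with parts (2) and (3) of Theorem~\ref{thm:G}.

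For the norm inequality, I would simply apply Theorem~\ref{thm:G}(2) with $(a,b)=(0,1)$. Since $\tilde I$ is the $1$-chain representing the unit interval and agrees (by Theorem~\ref{thm:E} and the equality of integrals over $(0,1)$ and $[0,1]$) with $\widetilde{(0,1)}$, we obtain
\[
\|LJ\|_{B^r} \;=\; \|\tilde I \times J\|_{B^r} \;\le\; |1-0|\,\|J\|_{B^r} \;=\; \|J\|_{B^r},
\]
which gives the first claim.

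For the chain-homotopy identity, I would invoke the Leibniz rule for Cartesian wedge product (Theorem~\ref{thm:G}(3)). Since $\tilde I$ has degree $1$,
\[
\p L(J) \;=\; \p(\tilde I \times J) \;=\; (\p \tilde I)\times J + (-1)^{1}\,\tilde I \times (\p J) \;=\; (\p \tilde I)\times J - L(\p J),
\]
so $(\p L + L\p)(J) = (\p \tilde I)\times J$. It remains to compute $\p \tilde I$. Because $\tilde I$ is an affine $1$-cell and the boundary operator of Theorem~\ref{thm:B} coincides with the classical boundary on polyhedral chains, we have $\p \tilde I = (1;1) - (0;1)$. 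Substituting and using bilinearity of $\times$ yields
\[
(\p L + L\p)(J) \;=\; \bigl((1;1)-(0;1)\bigr)\times J \;=\; (1;1)\times J - (0;1)\times J,
\]
as required.

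The only genuinely non-mechanical step is justifying $\p\tilde I = (1;1)-(0;1)$ in the space of differential chains rather than merely as a polyhedral computation; but since $\tilde I$ lies in the image of the polyhedral inclusion which is dense in $\hat{\cal B}_1(U)$ and $\p$ is continuous and restricts to the classical boundary operator there (as noted immediately before Theorem~\ref{thm:B}), this identification is forced. Once that is in hand, both statements of the lemma follow without any further estimate or limit argument.
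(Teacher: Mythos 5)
Your proof is correct and follows essentially the same route as the paper: both assertions rest on Theorem \ref{thm:G} (items (2) and (3)) together with the identification \( \p\tilde I = (1;1)-(0;1) \). If anything you are more complete than the printed proof, which treats only the norm inequality (routing it through the Test Lemma before invoking Theorem \ref{thm:G}) and defers the chain-homotopy identity to the proof of Theorem \ref{thm:homotopy}, where the paper carries out exactly the Leibniz-rule computation you give.
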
  

\begin{proof} 
  According to  the Test Lemma, the inequality reduces to showing
\( \| \tilde{I} \times \D_{\s^j} (p;\a))\|_{B^r} \le   \|\s\|\a\|  \) for all \( 0 \le j \le r. \)    Using Theorem \ref{thm:G} we know \( \| \tilde{I} \times \D_{\s^j} (p;\a))\|_{B^r} \le \| \D_{\s^j} (p;\a))\|_{B^r} \le \|\s\|\|\a\| \).
\end{proof}
      
Let \( F:[0,1] \times U_1 \to U_2 \) be an element of  \( {\cal M}^r([0,1] \times U_1 \to U_2) \)  with \( F(0,p) = f_0(p) \) and \( F(1;p) = f_1(p) \).   Then \( F_*:\hat{\cal B}_k^r((0,1) \times U_1) \to \hat{\cal B}_k^r(U_2) \) is a continuous linear map.  We remark that \( \widetilde{[0,1]} \times (p;1) \in \hat{\cal B}_k^r((0,1) \times U_1) \) for all \( p \in U_1 \), even though \( [0,1] \) is closed (see \cite{OC} for a full discussion about chains in open sets.).         Let \(  K = K_k :=   F_* L_k: \hat{\cal B}_{k}^r( U_1) \to \hat{\cal B}_{k+1}^{r-1}( U_2) \) for \( r \ge 0 \).

\begin{thm}\label{lem:Kone}   If \( F \in {\cal M}^r([0,1] \times U_1 \to U_2) \), then
  \( K = F_* L \) extends to a continuous linear map	\( K: \hat{\cal B}_{k}^r(U_1) \to \hat{\cal B}_{k+1}^r(U_2)  \) satisfying \( \|K J\|_{B^r} \le mn\max \{ 1, r |F|_{D^r}\} \|J\|_{B^r} \) for all differential chains \( J \in \hat{\cal B}_{k}^r(U_1) \) for all \( 0 \le k \le n-1 \).  
\end{thm}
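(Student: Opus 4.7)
The plan is to read $K$ as the composition $K = F_* \circ L$ of two operators whose $B^r$-boundedness has already been established, and to multiply the resulting constants. No new geometric construction is needed; the work is bookkeeping inside the inequalities of Lemma \ref{lem:cart} and Theorem \ref{thm:D}.

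I would begin on Dirac $k$-chains $A \in {\cal A}_k(U_1)$. By Lemma \ref{lem:cart},
\[
\|LA\|_{B^{r,(0,1)\times U_1}} = \|\tilde{I} \times A\|_{B^{r,(0,1)\times U_1}} \le \|A\|_{B^{r,U_1}},
\]
so $L$ extends to a continuous map $\hat{\cal B}_k^r(U_1) \to \hat{\cal B}_{k+1}^r((0,1)\times U_1)$ of operator norm at most $1$. Since $F \in {\cal M}^r([0,1] \times U_1, U_2)$ extends to a map defined on an open neighborhood of its closed domain (as per the definition of ${\cal M}^r$ on regular closed sets), pushforward by $F$ is defined on chains supported in $(0,1) \times U_1$, and Theorem \ref{thm:D} gives
\[
\|F_*(LA)\|_{B^{r,U_2}} \le mn \max\{1, r|F|_{D^r}\}\, \|LA\|_{B^{r,(0,1)\times U_1}}.
\]
Chaining the two inequalities,
\[
\|KA\|_{B^{r,U_2}} = \|F_*(LA)\|_{B^{r,U_2}} \le mn \max\{1, r|F|_{D^r}\}\, \|A\|_{B^{r,U_1}}.
\]
Because this uniform bound holds on the dense subspace ${\cal A}_k(U_1)$ of the complete space $\hat{\cal B}_k^r(U_1)$, $K$ extends uniquely by continuity to a bounded linear map $\hat{\cal B}_k^r(U_1) \to \hat{\cal B}_{k+1}^r(U_2)$ satisfying the same estimate. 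Continuity with respect to the inductive-limit topology then follows from continuity in each $B^r$ norm.

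The only subtlety I anticipate is the matching of domains: the chain $\tilde{I}$ represents the closed interval $[0,1]$, yet $LA$ must be interpreted as an element of $\hat{\cal B}_{k+1}^r((0,1) \times U_1)$ for the norm estimate to quote Theorem \ref{thm:G}(2) cleanly and for $F_*$ to apply. This is exactly the point flagged in the remark preceding the theorem statement (where $\widetilde{[0,1]} \times (p;1)$ is noted to lie in $\hat{\cal B}_k^r((0,1) \times U_1)$), and it is compatible with the convention that ${\cal M}^r$-maps on regular closed sets extend to a neighborhood. Apart from this bookkeeping, the proof reduces to a single line composing the two previous norm estimates, and I do not expect any genuine obstacle.
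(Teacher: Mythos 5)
Your proposal is correct and follows the paper's own argument exactly: the proof is the one-line composition $\|F_*LJ\|_{B^r} \le mn\max\{1, r|F|_{D^r}\}\|LJ\|_{B^r} \le mn\max\{1, r|F|_{D^r}\}\|J\|_{B^r}$, combining Theorem \ref{thm:D} with Lemma \ref{lem:cart}. Your additional remarks on density, the extension to the completion, and the $[0,1]$ versus $(0,1)$ bookkeeping are sound elaborations of what the paper leaves implicit.
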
 
          
\begin{proof} 
\( \|K J\|_{B^r} = \|F_* L J\|_{B^r} \le  mn\max \{ 1, r |F|_{D^r}\}\|LJ\|_{B^r} \le mn\max \{ 1, r |F|_{D^r}\} \|J\|_{B^r} \).  
\end{proof}

\begin{thm}\label{thm:homotopy}  If  \( f_0, f_1:U_1 \subset \R^n \to U_2 \subset \R^m \) are  \( B^r \) homotopic,   the maps of chain complexes  \( f_{1*}, f_{0*}: \hat{\cal B}_k^r(U_1) \to \hat{\cal B}_k^r(U_2) \) are chain homotopic through the family of maps \( \{K_k\} \).  That is,
	\[ \p_{k+1} K_k + K_{k+1} \p_k =    f_{1*} - f_{0*}  \]  for all \( 0 \le k \le n-1 \).
\end{thm}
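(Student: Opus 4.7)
My plan is to exploit the factorization $K_k = F_* L_k$ together with two structural identities already proven in the excerpt: the commutation $\partial \circ F_* = F_* \circ \partial$ from Theorem \ref{thm:D}, and the ``prism'' identity $(\partial L + L\partial)(J) = (1;1)\times J - (0;1)\times J$ from Lemma \ref{lem:cart}. By the continuity statements in Theorems \ref{thm:B}, \ref{thm:D}, \ref{thm:G} and Theorem \ref{lem:Kone}, every operator in sight is continuous, so it suffices to verify the identity on a Dirac chain $(p;\alpha)$ and extend by density.

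The core calculation is a three-line chase:
\[
\partial K_k J + K_{k-1}\partial J
 = \partial F_* L_k J + F_* L_{k-1}\partial J
 = F_*(\partial L_k + L_{k-1}\partial) J
 = F_*\!\bigl((1;1)\times J - (0;1)\times J\bigr),
\]
using Theorem \ref{thm:D} in the middle step and Lemma \ref{lem:cart} in the last. It then remains to identify $F_*\bigl((t;1)\times J\bigr)$ with $f_{t*}J$ for $t\in\{0,1\}$.

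This identification is the only step requiring care. Unwinding the definition of Cartesian wedge product in \S\ref{sub:cartesian_product}, a simple $k$-element $(p;\alpha)\in{\cal A}_k(U_1)$ produces $(t;1)\times(p;\alpha) = ((t,p);\iota_{2*}\alpha)$, a $k$-element in $\{t\}\times U_1$ whose $k$-vector lies entirely in the $\{0\}\times\R^n$ factor. Since $F(t,\,\cdot\,) = f_t$ by hypothesis, the total derivative $DF_{(t,p)}$ restricted to the subspace $\{0\}\times\R^n$ coincides with $Df_t|_p$, so $F_{(t,p)*}\iota_{2*}\alpha = f_{t*}\alpha$. Therefore
\[
F_*\!\bigl((t;1)\times(p;\alpha)\bigr) = \bigl(F(t,p);\, F_{(t,p)*}\iota_{2*}\alpha\bigr) = (f_t(p);\, f_{t*}\alpha) = f_{t*}(p;\alpha).
\]
By linearity this extends to Dirac chains, and by continuity to all of $\hat{\cal B}_k^r(U_1)$.

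The main subtle point I expect to address is that $\tilde I \times J$ lives most naturally in $\hat{\cal B}^r_{k+1}((0,1)\times U_1)$ while the boundary terms $(0;1)\times J$ and $(1;1)\times J$ are supported on the endpoints $\{0,1\}\times U_1$; one must check that these endpoint chains legitimately lie in the space on which $F_*$ acts, invoking the hypothesis $F\in{\cal M}^r([0,1]\times U_1 \to U_2)$ and the remark in \S\ref{sub:operators1} concerning maps defined on closures of regular open sets. Once that bookkeeping is settled, combining the three-line computation with the endpoint identification yields $\partial K_k J + K_{k-1}\partial J = f_{1*}J - f_{0*}J$ for Dirac chains, and continuity closes the argument for all $J\in\hat{\cal B}_k^r(U_1)$.
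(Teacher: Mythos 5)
Your proposal is correct and follows essentially the same route as the paper: factor $K = F_*L$, commute $\partial$ past $F_*$ via Theorem \ref{thm:D}, apply the prism identity $(\partial L + L\partial)J = (1;1)\times J - (0;1)\times J$, and identify the endpoint pushforwards with $f_{1*}$ and $f_{0*}$. The only difference is that you supply the verification of $F_*\bigl((t;1)\times(p;\alpha)\bigr) = f_{t*}(p;\alpha)$ and the endpoint bookkeeping explicitly, which the paper leaves implicit.
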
    
\begin{proof}   By Theorems \ref{thm:D} and \ref{thm:G}
\begin{align*}
   (\p K + K \p)(J) =  (\p F_* L + F_* L \p)(J) &=     F_*(\p L + L \p)(J) 
\\&=    F_*(\p( \tilde{I} \times J) +  \tilde{I} \times\p J)  \\&
=    F_*((\p\tilde{I}) \times J - \tilde{I} \times \p J +  \tilde{I} \times\p J) \\&
= F_*((\p\tilde{I}) \times J) \\&= F_*((1;1) \times J - (0;1) \times J)  \\&= (f_{1*}   - f_{0*} )(J). 
\end{align*} 
\end{proof} 
 If \( k = 0 \), \( f_0 = c \) and \( f_1 = I \),  then \( f_{1*} - f_{0*} = I_* \), but \( \p K + K \p = \p K \ne  I \), so the theorem fails for \( k = 0 \).   
\begin{cor}\label{cor:ppp}
    Pushforwards through homotopic maps act identically on the homologies.  That is, 
\( H_k(f_{0*}) = H_k(f_{1*}) \).    In particular, on a contractible domain, \( H_k(I_*) = H_k(c_*) \), where \( c \) is the constant map.
\end{cor}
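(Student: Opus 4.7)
The plan is to deduce Corollary \ref{cor:ppp} as a purely formal consequence of Theorem \ref{thm:homotopy}, invoking the standard homological principle that chain homotopic chain maps induce the same map on homology.

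First I would fix $k \ge 1$ and a cycle $J \in \ker \p_k \subset \hat{\cal B}_k^r(U_1)$, then apply Theorem \ref{thm:homotopy} to a $B^r$ homotopy $F$ interpolating $f_0$ and $f_1$. Since $\p_k J = 0$, the chain homotopy identity collapses to
\[
f_{1*}J - f_{0*}J \;=\; \p K_k J + K_{k-1}\p_k J \;=\; \p K_k J,
\]
exhibiting $f_{1*}J - f_{0*}J$ as a boundary in $\hat{\cal B}_k^r(U_2)$. Hence $[f_{1*}J] = [f_{0*}J]$ in $\hat{H}_k(U_2)$. Because each $f_{i*}$ commutes with $\p$ by Theorem \ref{thm:D} and therefore descends to a well-defined map on homology, this yields $\hat{H}_k(f_{0*}) = \hat{H}_k(f_{1*})$. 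The case $k = 0$ is handled identically: $K_{-1}\p_0$ is vacuous, so the identity reduces to $\p K_0 J = f_{1*}J - f_{0*}J$ for every $0$-chain $J$ (which is automatically a cycle).

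The ``in particular'' assertion then follows by specializing to $U_1 = U_2 = U$, $f_1 = I$, and $f_0 \equiv c$: the hypothesis of $B^r$ contractibility is precisely the existence of a $B^r$ homotopy between these two maps, so the general statement delivers $\hat{H}_k(I_*) = \hat{H}_k(c_*)$.

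There is essentially no obstacle here — all of the analytic and geometric substance (continuity of $K = F_*L$, the Leibniz rule for Cartesian wedge product, and the identification $F_*((t;1) \times J) = f_{t*}J$) has already been absorbed into Theorem \ref{thm:homotopy}, leaving only a routine homological deduction.
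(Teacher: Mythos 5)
Your proposal is correct and matches the paper's route: the paper gives no separate proof of Corollary \ref{cor:ppp}, deducing it directly from Theorem \ref{thm:homotopy} via the standard fact (stated earlier in \S\ref{sub:chain_homotopy}) that chain homotopic maps of complexes induce equal maps on homology, which is exactly the computation $f_{1*}J - f_{0*}J = \p K_k J$ for cycles $J$ that you spell out. The only point worth flagging is that at $k=0$ the paper cautions that $c_* \ne 0$ (so one cannot conclude $\hat{H}_0(U_1)=0$), but your version of the $k=0$ case correctly asserts only $\hat{H}_0(f_{0*}) = \hat{H}_0(f_{1*})$, which is all the corollary claims.
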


Proof of the Geometric Poincar\'e Lemma: 
\( \hat{H}_k(c_*) = 0 \) and thus \( \hat{H}_k(I_*) = 0 \).  But this implies \( \hat{H}_k(U_1) = 0 \).  That is, \( \text{im\,} \p = \ker \p \) which is what we wanted to prove.  
 
 The cone over a  differential cycle  is unique up to addition of a differential chain boundary since $J = \p (K J + \p C).$ The chain $\p C$ is   a \emph{gauge choice}. In codimension one, $L = 0.$
   \subsection{Poincar\'e Lemma for forms}\label{sub:poincare_lemma_for_forms} 
 Suppose \( F \in {\cal M}^r([0,1] \times U_1 \to U_2) \).    Let \( F_t:U_1 \to U_2 \) be given by\( F_t(p) := F(t,p) \) and \( K_k = F_* L_k \).     
 Define \( A_k \o :=  \o K_k \).
\begin{thm}\label{thm:AK}  \mbox{}   \( A_k: {\cal B}_{k+1}(U_2) \to {\cal B}_k(U_1) \)  is a continuous linear map for all \( 1 \le k \le n-1 \) and satisfies
  \begin{enumerate}
	\item \( d A_k + A_{k+1} d = F_1^* - F_0^* \);

	\item \( \|A\o\|_{B^r} \le  mn\max \{ 1, r |F|_{D^r}\} \|\o\|_{B^r} \) for all \( \o \in \hat{\cal B}_k^r(U_2) \);
	\item  \( (A \o) (p;\a) =  \int_0^1 i_{\p/\p t}    \o(F_t(p);  \a)   dt  \)                     

\end{enumerate}
\end{thm}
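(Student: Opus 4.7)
The plan is to derive all three properties directly from the results already established for the chain-level operator $K = F_* L$, by dualizing under the nondegenerate integral pairing $\cint: \hat{\cal B}_k^r(U) \otimes {\cal B}_k^r(U) \to \R$ from \cite{OC} Theorem 2.8.2. By definition $A_k\o = \o \circ K_k$ is exactly the transpose of $K_k$, so each statement on $A$ should shadow a dual statement on $K$ that is already in hand.

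For (1), I would take any test chain $J \in \hat{\cal B}_{k+1}^r(U_1)$ and compute
\[
(dA_k\o + A_{k+1}d\o)(J) = A_k\o(\p J) + d\o(K_{k+1}J) = \o(K_k \p J) + \o(\p K_{k+1} J),
\]
using General Stokes' (Theorem \ref{thm:B}) twice. By Theorem \ref{thm:homotopy}, the bracket equals $\o((f_{1*}-f_{0*})J)$, and by Theorem \ref{thm:D} this is $(F_1^*\o - F_0^*\o)(J)$. Nondegeneracy of $\cint$ then upgrades the pointwise-on-$J$ identity to the asserted equality of forms.

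For (2), I would feed Theorem \ref{lem:Kone} into the dual pairing:
\[
\|A\o\|_{B^r} = \sup_{\|J\|_{B^r}\le 1}|\o(KJ)| \le \|\o\|_{B^r}\sup_{\|J\|_{B^r}\le 1}\|KJ\|_{B^r} \le mn\max\{1, r|F|_{D^r}\}\,\|\o\|_{B^r}.
\]
This bound simultaneously gives continuity of $A_k:{\cal B}_{k+1}^r(U_2)\to {\cal B}_k^r(U_1)$, and passing to the projective limit $r\to\infty$ yields continuity of $A_k:{\cal B}_{k+1}(U_2)\to {\cal B}_k(U_1)$.

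For (3), I would unwind the definitions: $A\o(p;\a) = \o(F_*L(p;\a)) = F^*\o(\widetilde{I}\times (p;\a))$. The heart of the argument is a Fubini-type evaluation rule for Cartesian wedge with the interval $1$-chain $\widetilde{I}$: for any $(k+1)$-form $\eta$ on $[0,1]\times U_1$,
\[
\eta(\widetilde{I}\times(p;\a)) = \int_0^1 \eta_{(t,p)}\!\left(\tfrac{\p}{\p t}\wedge \iota_{2*}\a\right)dt.
\]
I would establish this first on affine $1$-cells (Theorem \ref{thm:E}), where it reduces to the Riemann definition of the integral; then extend to Dirac chains by linearity; and finally to arbitrary $(p;\a)$ by continuity of both $\times$ (Theorem \ref{thm:G}) and the integral pairing. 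Applied to $\eta = F^*\o$, and using $F_{(t,p)*}(\tfrac{\p}{\p t}) = \tfrac{\p F}{\p t}(t,p)$ together with $F_{(t,p)*}(\iota_{2*}\a) = F_{t*}\a$, the integrand becomes $\o_{F_t(p)}(\tfrac{\p F}{\p t}\wedge F_{t*}\a)$, which is the intended meaning of $i_{\p/\p t}\o(F_t(p);\a)$.

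The main obstacle is (3): the identity itself is essentially Fubini's theorem, but inside the $\hat{\cal B}$ framework one must be careful to justify it as a continuous extension from affine cells, and one must reconcile the compact notation $i_{\p/\p t}\o(F_t(p);\a)$ on the right-hand side with the honest pullback form $i_{\p/\p t}F^*\o$ on $[0,1]\times U_1$. Parts (1) and (2) are then formal duals of theorems already proved and present no substantive difficulty.
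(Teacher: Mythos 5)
Your treatment of (1) and (2) matches the paper's in substance: the paper disposes of both by citing Lemma \ref{lem:cart} and Theorem \ref{thm:D} (equivalently, Theorems \ref{lem:Kone} and \ref{thm:homotopy}), and your explicit dualization through the nondegenerate pairing --- Stokes' Theorem twice plus the chain homotopy identity for (1), the dual-norm estimate for (2) --- is just the argument the paper leaves implicit. For (3) you take a genuinely different route at the key step. Both you and the paper begin by unwinding \( A\o(p;\a) = \cint_{\widetilde I\times(p;\a)}F^*\o \) via Theorem \ref{thm:D}; but where you propose to prove a standalone Fubini-type evaluation rule for \( \eta(\widetilde I\times(p;\a)) \) by verifying it on affine \(1\)-cells and extending by linearity and continuity, the paper instead factors the chain \( \widetilde I\times(p;\a) \) as \( (-1)^k E_\a\widetilde{\g_p} \) and uses the adjunction \( \cint_{E_\a J}\o = \cint_J i_\a\o \) of Theorem \ref{thm:A} to contract away \( \a \), reducing everything to the integral of an honest \(1\)-form over the interval chain \( \widetilde{\g_p} \), which Theorem \ref{thm:E} converts to \( \int_0^1(\cdot)\,dt \). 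The paper's route is more economical because the adjunction you would need to build into your Fubini lemma is already proved; your route is self-contained in spirit but would end up re-deriving Theorem \ref{thm:A} inside the density argument, and as written it blurs the distinction between the \((k+1)\)-chain \( \widetilde I\times(p;\a) \) and the affine \(1\)-cell \( [0,1]\times\{p\} \) to which Theorem \ref{thm:E} actually applies. Your final reconciliation of \( i_{\p/\p t}F^*\o \) with the notation \( i_{\p/\p t}\o(F_t(p);\a) \), via \( F_{(t,p)*}(\p/\p t)=\p F/\p t \) and \( F_{(t,p)*}\iota_{2*}\a = F_{t*}\a \), is exactly the content of the paper's last two lines, so no gap there.
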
  

\begin{proof}
(a) and (b) follow immediately from  Lemma \ref{lem:cart}  and Theorem \ref{thm:D}.

(c): 	   Recall from Theorem \ref{thm:E} that if \( \widetilde{\g} \) represents \( [0,1] \) in \( \hat{\cal B}_1^r(\R^1) \), and \( f dt \in {\cal B}_1^r(\R^1) \) is a $1$-form, then \( \cint_{\widetilde{\g}} f dt = \int_0^1 f(t)dt \).  Let \( (p;\a) \) be a simple $k$-element with \( p \in U_1 \).  Let \( \widetilde{\g_p} \) be the $k$-chain representing the interval \( [0,1] \times \{p\} \subset [0,1] \times U_1 \).   
	 If  \( \o \in  {\cal{B}}_{k+1}^r(U_2) \),  then   
	\begin{align*}
	  A \o  (p;\a) =    \cint_{K (p;\a)} \!\! \o =    \cint_{F_*L (p;\a)}\!\! \o &=  \cint_{  \widetilde{I} \times (p;\a)}\!\! F^*\o &&\mbox{ by Theorem \ref{thm:D}} \\& =  (-1)^k \cint_{  (p;\a) \times \widetilde{I} } \!\!\!\!F^*\o  &&\mbox{ by antisymmetry of } \wedge \\& 
	=  (-1)^k\cint_{  E_\a  \widetilde{\g_p}} \!\!F^*\o &&\mbox{ by the definition of } \times  \\&
	=  (-1)^k\cint_{ \widetilde{\g_p}} i_\a F^*\o   &&\mbox{ by Theorem \ref{thm:A}} \\&
	 = \cint_{\widetilde{\g_p}} i_{\p/\p t} i_\a F^* \o \wedge dt &&\mbox{ by the definition of interior product } \\&
	 = \int_0^1 i_{\p/\p t} i_\a F^* \o((t,p);1)   dt   \\&
 = \int_0^1 i_{\p/\p t}    \o(F_t(p);  \a)   dt   
	\end{align*} 
	
 \end{proof} 
\begin{remarks}  \mbox{}
	\begin{itemize}
		\item 	If \( k = 0 \)    and \( \o \in {\cal B}_1^r(U_2) \), then \(A \o    \) is a \( 0 \)-form, i.e., a function, and \(  A\o(p) =   A \o(p;  1)  = \int_0^1 i_{\p/\p t}    \o \circ F_t(p)   dt \), the classical formula for the homotopy operator of functions.  
		\item  If \( k = n -1 \) then \( \o \in {\cal B}_{n}^r(U_2) \), and thus \( \o = g d V \) where \( g \in {\cal B}_0^r(U_2) \).   It follows that  \(A \o = A g d V   \) is an \( (n-1) \)-form and \(     A (g dV)(p; \a)  = \int_0^1 i_{\p/\p t}   g dV    (F_t(p);\a) dt = \int_0^1 g(F_t(p)) dt \).  
	\end{itemize}
\end{remarks}
We immediately deduce:

 \begin{cor}
  [Poincar\'e Lemma for forms] \label{pf} Let   \( U_1 \subset U_2 \subset \R^n \) be open and \( U_1 \) be \( B^r \) contractible in \( U_2 \), and \( 2 \le k \le n-1 \).  If $\omega \in {\cal B}_k^r(U_2)$ satisfies \( d \o = 0 \), there exists \( \eta \in {\cal B}_{k-1}^r(U_1) \)  such that $\omega|_{U_1} = d \eta$.   
\end{cor}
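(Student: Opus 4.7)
The plan is to read off $\eta$ directly from the homotopy formula in Theorem \ref{thm:AK}(a), feeding in the $B^r$ contraction guaranteed by the hypothesis. Since $U_1$ is $B^r$ contractible in $U_2$, I get a map $F\in{\cal M}^r([0,1]\times U_1,U_2)$ with $F_0$ equal to the inclusion $\iota:U_1\hookrightarrow U_2$ and $F_1$ equal to a constant map $c:U_1\to U_2$. This $F$ drives the operator $A_{k-1}:{\cal B}_k^r(U_2)\to{\cal B}_{k-1}^r(U_1)$ of Theorem \ref{thm:AK}, which is continuous and defined whenever $1\le k-1\le n-1$, i.e.\ whenever $2\le k\le n$. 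The hypothesis $2\le k\le n-1$ lies inside this range, so $A_{k-1}\o$ makes sense and lives in ${\cal B}_{k-1}^r(U_1)$, which is exactly where we want $\eta$.

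Next I would evaluate the two endpoint pullbacks in the chain homotopy identity. Because $\iota$ is the inclusion, $F_0^*\o=\iota^*\o=\o|_{U_1}$. Because $F_1$ is a constant map and $k\ge 1$, the differential $DF_{1,p}$ is zero on vectors, so $F_1^*\o=0$ on every $k$-element (this is the only place the restriction $k\ge 1$ is really used; it is why the classical statement also excludes $k=0$). Plugging these into Theorem \ref{thm:AK}(a) applied to $\o$ gives
\[
 dA_{k-1}\o+A_k(d\o)=F_1^*\o-F_0^*\o=-\,\o|_{U_1}.
\]
Since $d\o=0$ by hypothesis, the second term on the left drops out, and I obtain $d(-A_{k-1}\o)=\o|_{U_1}$. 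Setting $\eta:=-A_{k-1}\o$ produces an element of ${\cal B}_{k-1}^r(U_1)$, by part (b) of Theorem \ref{thm:AK} which gives the continuity bound $\|\eta\|_{B^r}\le mn\max\{1,r|F|_{D^r}\}\|\o\|_{B^r}$, confirming the required regularity class.

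There is no real obstacle here: the entire content of the corollary has already been packaged into Theorem \ref{thm:AK}, which was itself obtained by dualizing the chain-level homotopy $K=F_*L$ against the integral pairing. The one subtlety worth flagging in the write-up is that the argument breaks for $k=1$ (since then $\eta$ would have to be a $0$-form and $A_0$ is not covered by the $1\le k-1$ constraint coming from Theorem \ref{thm:AK}) and for $k=0$ (which would need $F_1^*\o=0$ to fail, matching the remark after Theorem \ref{thm:homotopy}), which is consistent with the stated range $2\le k\le n-1$. No further computation is required.
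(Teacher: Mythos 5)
Your proof is correct and follows essentially the same route as the paper: both obtain \( \eta \) from the homotopy operator \( A\o = \o K \) attached to the contraction \( F \). The only difference is that you invoke the already-packaged identity of Theorem \ref{thm:AK}(a) directly (tracking the sign \( \eta = -A_{k-1}\o \) explicitly), whereas the paper re-derives that identity on \( k \)-elements by applying Stokes' Theorem \ref{thm:B} twice together with the chain homotopy of Theorem \ref{thm:homotopy}.
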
   
\begin{proof}   Let \( F: I \times U_1  \to U_2\) be the contraction.  Then \( F_{0*} = I_* \) and \( F_{1*} = 0 \). 
  Set $\eta = A\omega$ and let \( p \in U_1 \).   Applying Stokes' Theorem \ref{thm:B}  twice,   and Theorem \ref{thm:homotopy}  we have
\begin{align*}
	 d\eta(p;\a) = d(\o K) (p;\a)   =  \cint_{K \p (p;\a)}  \!\! \!\! \!\! \!\! \!\!\!\!\o   \quad =  \cint_{(p;\a)}   \!\!\!\! \!\! \o - \cint_{\p K  (p;\a)}   \!\! \!\! \!\! \!\!\!\! \!\! \o  \quad =  \o (p;\a).
\end{align*}

\end{proof}
  \section{Applications}\label{app}

\begin{lem}
  \label{cycle}  Let $n > 0$. There does not exist a   nonzero differential $n$-cycle   in a contractible open set of a smooth $n$-manifold $M$.  
\end{lem}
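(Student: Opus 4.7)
The plan is almost tautological once one identifies what the Poincar\'e Lemma forces in top degree. I would apply Theorem \ref{thm:poincarelemma} directly to the supposed cycle $J \in \hat{\cal B}_n(U)$, taking $U_1 = U_2 = U$, and then observe that the resulting primitive $C$ must live in a space that is trivially zero.

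First I would verify that the hypotheses of Theorem \ref{thm:poincarelemma} are satisfied. Since $U$ is contractible in the smooth $n$-manifold $M$, the identity $U \hookrightarrow U$ is homotopic through a smooth map $F:[0,1] \times U \to U$ to a constant map, and such a smooth contraction belongs to ${\cal M}^r([0,1] \times U \to U)$ for every $r$. Thus $U$ is $B^r$ contractible in itself. The Poincar\'e Lemma, which by the remark in \S\ref{sub:preliminaries} extends to open subsets of Riemannian manifolds (alternatively one reduces to a coordinate chart when possible), then produces $C \in \hat{\cal B}_{n+1}(U)$ with $\partial C = J$, since the range $1 \le k \le n$ allowed by Theorem \ref{thm:poincarelemma} includes $k = n$.

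The key step is the observation that $\hat{\cal B}_{n+1}(U) = \{0\}$. Dirac $(n+1)$-chains on $U \subset M$ are by construction formal sums $\sum_i (p_i; \a_i)$ with $\a_i \in \L_{n+1}(T_{p_i} M)$, and since $\dim M = n$ we have $\L_{n+1}(T_p M) = 0$ for every $p \in U$. Hence ${\cal A}_{n+1}(U) = 0$, and its $B^r$ completion is also zero. Consequently $C = 0$, and therefore $J = \partial C = 0$, which is the claim.

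I do not foresee any real obstacle; the entire content of the lemma is the dimensional vanishing $\L_{n+1}(\R^n) = 0$ combined with the already-proved Poincar\'e Lemma. The only point requiring mild care is the transfer of Theorem \ref{thm:poincarelemma} from open subsets of $\R^n$ to open subsets of an $n$-manifold, and the verification that a smooth contraction supplies the requisite $B^r$ homotopy for every $r$; both are routine given the setup of \S\ref{sub:preliminaries}.
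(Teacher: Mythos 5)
Your proof is correct and is essentially the paper's argument: the paper applies the homotopy identity $J = \p K J + K\p J$ directly and notes that $KJ$, being an $(n+1)$-chain in an $n$-dimensional space, is degenerate, while you invoke Theorem \ref{thm:poincarelemma} as a black box and observe that the primitive $C$ (which in the paper's construction is exactly $KJ$) lies in $\hat{\cal B}_{n+1}(U)=\{0\}$ since $\L_{n+1}(\R^n)=0$. The dimensional vanishing is the whole content in both versions, so the two proofs coincide in substance.
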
 

\begin{proof}
  If $\partial J = 0$, then   $K\partial J = 0$.   However, $K J = 0$ since every $(n+1)$-chain in $\R^n$ is degenerate. Thus   $$J =
  \partial K J + K
  \partial J = 0.$$
\end{proof}

\subsection{Generalization of the Intermediate Value Theorem} 
\label{sub:intermediate_value_theorem_rolle_s_theorem_and_themean_value_theorem}

\begin{thm}
  \label{ivt}(General Intermediate Value Theorem) Suppose $G: U_1 \subset \R^n \to  U_2 \subset \R^m$ is an element of \( {\cal M}^r(U_1, U_2) \) where $1 \le n \le m$ and \( 1 \le r \le \i \). Let $J \in \hat{\cal B}_n^r(U_1)$   and $K \in \hat{\cal B}_n^r(U_2)$.   Then $$G_* ( \partial J) = \partial K \iff G_*J = K.$$
\end{thm}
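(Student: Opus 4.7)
The plan is to split the biconditional. One direction is immediate from the commutation $\partial \circ G_* = G_* \circ \partial$ established in Theorem \ref{thm:D}: applying $\partial$ to both sides of $G_* J = K$ yields $\partial K = \partial G_* J = G_*(\partial J)$.

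For the converse, I would assume $G_*(\partial J) = \partial K$ and use the same commutation to write $\partial(G_* J - K) = 0$, so that $L := G_* J - K \in \hat{\cal B}_n^r(U_2)$ is a differential $n$-cycle. The goal is then to deduce $L = 0$ by imitating the argument of Lemma \ref{cycle}. Applying the chain-homotopy identity from Theorem \ref{thm:homotopy} (with $f_0$ the identity and $f_1$ a constant map), which gives $\partial K + K \partial = c_* - I_*$, to the cycle $L$, and noting that the constant-map pushforward $c_*$ annihilates chains of positive degree, one obtains $L = -\partial(KL)$. It then suffices to show $KL = 0$: this is where Lemma \ref{cycle}'s key observation enters, namely that $KL$ would be an $(n+1)$-chain in an $n$-dimensional effective ambient, and hence degenerate.

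The main obstacle is justifying this degeneracy when $m > n$, since Lemma \ref{cycle} is stated for an ambient $n$-manifold whereas here $U_2 \subset \R^m$. The resolution should come through a support analysis: $G_* J$ is supported on the $n$-dimensional image $G(U_1)$, so by arranging the contraction to take values in (a neighborhood of) this image, $KL$ becomes an $(n+1)$-chain in an essentially $n$-dimensional set and therefore vanishes. Once this is in place, both $L = 0$ and $G_* J = K$ follow, completing the harder implication.
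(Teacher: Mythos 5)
Your proposal follows the same skeleton as the paper's proof: the easy direction is the commutation $\p\circ G_* = G_*\circ\p$ from Theorem \ref{thm:D}, and the hard direction forms the $n$-cycle $L := G_*J - K$ and kills it with the cone construction. The paper disposes of $L$ in one line by citing Lemma \ref{cycle}; you instead unwind that lemma's proof, and in doing so you put your finger on exactly the step the citation glosses over: when $m > n$ the cycle $L$ lives in $U_2 \subset \R^m$, not in an $n$-manifold, so the key fact that ``every $(n+1)$-chain is degenerate'' is unavailable.

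Your proposed repair, however, does not close this gap. The chain $L = G_*J - K$ contains the arbitrary competitor $K$, and nothing in the hypotheses confines $\supp(K)$ to a neighborhood of the $n$-dimensional image $G(U_1)$; so there is no way to ``arrange the contraction'' to make $KL$ an $(n+1)$-chain in an essentially $n$-dimensional set. In fact no argument can succeed at the stated level of generality: for $n < m$ take $J = 0$ and let $K$ be a nonzero $n$-cycle in $U_2$ (for instance, with $n=1$, $m=2$, the differential $1$-chain representing a round circle inside a disk $U_2\subset\R^2$, which is nonzero because it pairs nontrivially with $x\,dy$). Then $G_*(\p J) = 0 = \p K$ but $G_*J = 0 \ne K$. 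The implication from boundaries to chains genuinely requires that $U_2$ carry no nonzero $n$-cycles, which is exactly what Lemma \ref{cycle} supplies in the top-dimensional, contractible case $m = n$ --- the case in which the paper actually applies the theorem (note also that the printed statement omits the contractibility of $U_2$ that Lemma \ref{cycle} needs). So your instinct that something must be said about $m > n$ is correct, but the honest conclusion is that the hypothesis $n \le m$ should be $n = m$ (with $U_2$ contractible), not that a support argument rescues the general case.
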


\begin{proof}
  Suppose $G_* (\partial J) = \partial K $.  Since pushforward commutes with boundary, $\p(G_*J - K) = 0.$    By Lemma \ref{cycle} it follows that $G_*J = K.$ The converse is immediate since the boundary operator is continuous and commutes with pushforward\footnote{M.W. Hirsch helped the author clarify and simplify the original proof of the general intermediate value theorem announced as Corollary 22.8 in \cite{gmt}.}.   
\end{proof}      

This significantly strengthens the conclusion of the classical result in topology:
If  $G:B^n \subset \R^n \to \R^n$ is a continuous map whose restriction to the boundary of the ball  $B^n$ is the identity map, than the image of $g$ contains  all of $B^n$.  Our result shows that if $G$ is Lipschitz, then $G_*\widetilde{B^n} = \widetilde{B^n}$.  For $n=m=1$, this generalizes the intermediate value theorem (see Figure    \ref{fig:funky}). 
                                   
      \begin{figure}[htbp]
      	\centering
      		\includegraphics[height=3in]{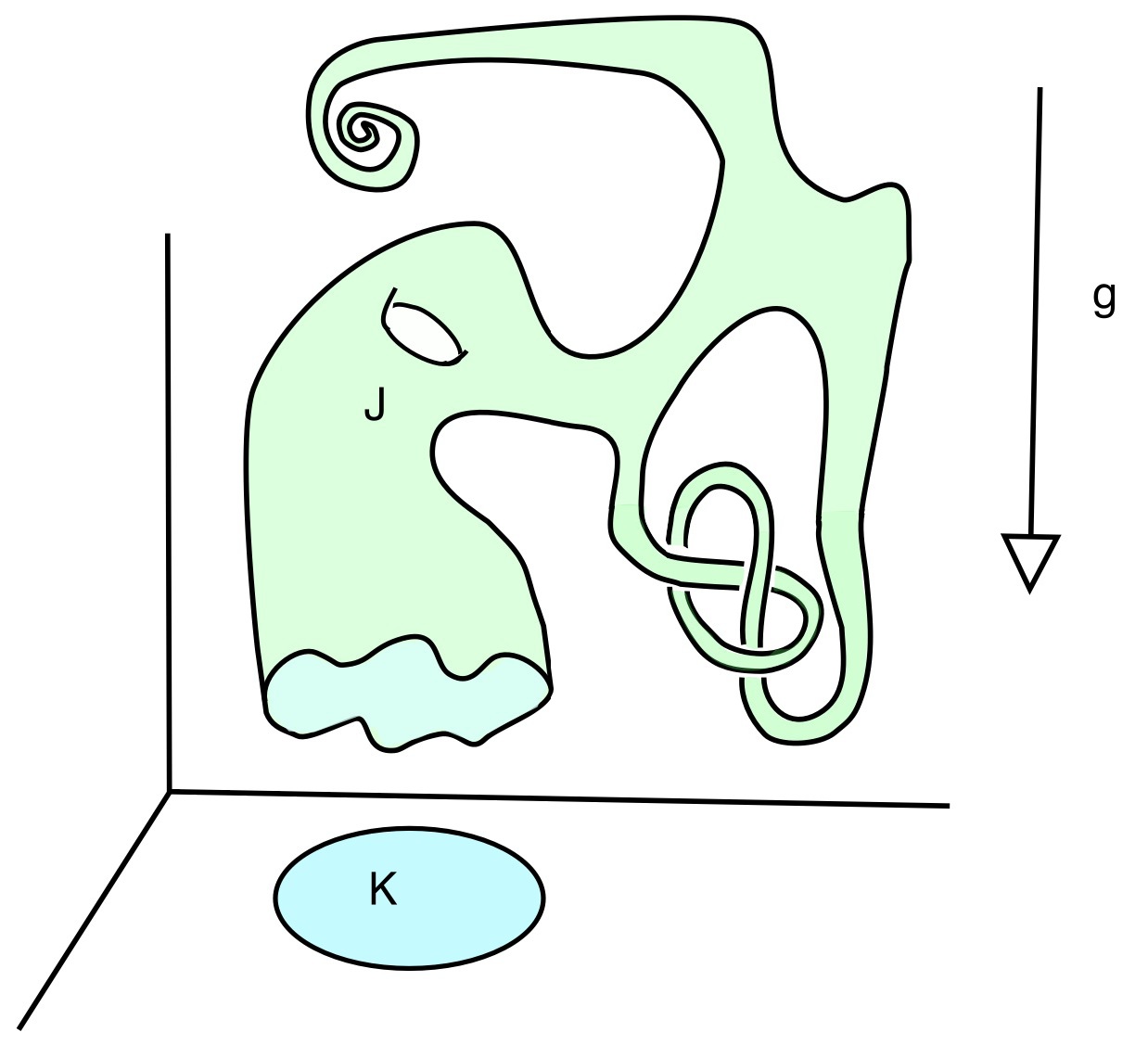}
      	\caption{General intermediate value theorem}
      	\label{fig:funky}
      \end{figure}
      
\begin{cor}\label{thm:jordan}
 Suppose \( \p J = 0 \) and \( J  \in \hat{\cal B}_k(U) \) is supported in a contractible open set \( U \).  Then \( K J = L  \) if and only if  \( J = \p L \).
\end{cor}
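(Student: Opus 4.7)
The plan is to apply the chain-homotopy identity of Theorem~\ref{thm:homotopy} to a contraction of $U$ and then invoke Lemma~\ref{cycle} to pin down the resulting cycle. Since $U$ is contractible in itself, fix a contraction $F:[0,1]\times U \to U$ and form the cone operator $K = F_* L:\hat{\cal B}_k(U) \to \hat{\cal B}_{k+1}(U)$. Theorem~\ref{thm:homotopy} then gives $\p K + K\p = c_* - I_*$, and because the constant-map pushforward $c_*$ annihilates $k$-chains for $k \ge 1$, this identity collapses, consistent with the normalization witnessed by the remark $J = \p(KJ + \p C)$ just after the Geometric Poincar\'e Lemma, to
\[ \p K J = J \]
on any $k$-cycle $J \in \hat{\cal B}_k(U)$.

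The forward direction is then immediate: if $KJ = L$, applying $\p$ and using $\p J = 0$ gives $\p L = \p K J = J$, so $J = \p L$.

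For the reverse direction, suppose $J = \p L$ and set $M := L - KJ$. Then $\p M = \p L - \p K J = J - J = 0$, so $M \in \hat{\cal B}_{k+1}(U)$ is a cycle supported in the contractible open set $U$. Lemma~\ref{cycle} forces $M = 0$, yielding $L = KJ$.

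The main obstacle is this last step: Lemma~\ref{cycle} is stated for top-dimensional cycles in an $n$-manifold, so the argument as written is cleanest when $k+1 = n$, matching the Jordan-theorem flavor suggested by the corollary's name. In higher codimension the Poincar\'e Lemma only shows that $M$ is a boundary, not that $M$ itself vanishes, so either the corollary must be read implicitly in the codimension-one setting, or an additional ingredient (such as $L$ and $KJ$ both lying in the range of the cone operator, pinning down the primitive uniquely) is needed. The other piece of bookkeeping is to confirm the sign normalization for $K$ inherited from the definition $K = F_* L$ with $F(0,\cdot) = I$, $F(1,\cdot) = c$, which is what forces the identity $\p K J = J$ rather than $\p K J = -J$ on cycles.
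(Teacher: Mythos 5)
Your proof is correct and follows essentially the same route as the paper: the forward direction is identical, and your reverse direction (the difference $L - KJ$ is a cycle in the contractible set $U$, hence zero by Lemma~\ref{cycle}) is exactly the paper's appeal to Theorem~\ref{ivt}, with that theorem's proof inlined and $G$ taken to be the identity. The dimensional caveat you flag is genuine but applies equally to the paper's own argument, since Theorem~\ref{ivt} and Lemma~\ref{cycle} are stated only for top-dimensional chains, so the corollary is indeed to be read with $k+1=n$; your sign remark is likewise a real (if harmless) normalization issue shared with the paper.
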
 

\begin{proof}
	Suppose \( KJ = L \).  Then \( \p KJ = \p L \).  But \( J = (K \p + \p K)J = \p K J = \p L \). Conversely, suppose \( J = \p L \).  Then \( (K \p + \p K) J = \p KJ = \p L  \). By Theorem \ref{ivt} \( KJ = L \). 
\end{proof} 

\section{Discrete Poincar\'e Lemma}  
\label{sec:discrete_poincar'e_lemma}
 Discrete versions of the Poincar\'e Lemmas  are readily available since Dirac chains are dense in the space of differential chains.   If \( J \in \hat{\cal B}_k(U) \), we can approximate \( J  \) with \( A = \sum (p_i;\a_i) \) and apply the operator \( K \) to each $k$-element \( (p_i;\a_i) \).  Since \( K \) is linear and continuous,     \( K J \) is approximated by \( \sum K(p_i;\a_i) \).  The differential complex \( {\cal A}_k^j(U) \) of Dirac chains of arbitrary order and dimension in \( U \) (see \cite{OC} \S 3.4) is the discrete analogue of \( \hat{\cal B}_k(U) \).    Since the other operators we use, e.g.,  boundary, are also linear and continuous,  we obtain discrete versions of all of the results of this paper.     If we fix a finite set of ``base points'' \( \{p_i\} \), the space of Dirac chains becomes finite dimensional and the operators can be represented as matrices.

\providecommand{\bysame}{\leavevmode\hbox to3em{\hrulefill}\thinspace}
\providecommand{\MR}{\relax\ifhmode\unskip\space\fi MR }
\providecommand{\MRhref}[2]{%
  \href{http://www.ams.org/mathscinet-getitem?mr=#1}{#2}
}
\providecommand{\href}[2]{#2}   

\addcontentsline{toc}{section}{References} 
\bibliography{Jennybib.bib, mybib.bib}{}
\bibliographystyle{amsalpha}

\bibliographystyle{amsalpha}

\end{document}